\documentclass[twocolumn,amsthm,nameyear]{autart}
\usepackage[T1]{fontenc}
\usepackage{amsmath,amssymb}
\usepackage[authoryear,round]
{natbib}
\usepackage[hidelinks,hypertexnames=false,bookmarks=false]{hyperref}
\hypersetup{pdftitle={Uniform observability criteria for linear time-varying systems},pdfauthor={Soulaimane Berkane}}
\usepackage{xcolor}
\usepackage{tikz}
\usetikzlibrary{positioning}

\journal{Automatica}
\articletype{Paper}
\date{25 October 2026}
\begin{document}
\onecolumn
\thispagestyle{empty}
\begingroup
\centering
{\Large\bfseries LaRSA Technical Report\par}
\vspace{0.5cm}
{\large Robotics and Autonomous Systems Laboratory\par}
{\large Universit\'e du Qu\'ebec en Outaouais\par}
\vspace{1.5cm}
{\huge\bfseries Uniform observability criteria for linear time-varying systems\par}
\vspace{1.2cm}
{\large Soulaimane Berkane\par}
\vspace{1cm}
\begin{tabular}{rl}
    \textbf{Report number:} & LaRSA-TR-2026-02 \\
    \textbf{Date:} & October 2026 \\
    \textbf{Version:} & arXiv submission version 1 \\
    \textbf{Affiliation:} & Universit\'e du Qu\'ebec en Outaouais \\
    \textbf{Laboratory:} & LaRSA -- Robotics and Autonomous Systems Laboratory \\
    \textbf{Contact:} & \href{mailto:soulaimane.berkane@uqo.ca}{soulaimane.berkane@uqo.ca} \\
\end{tabular}
\par\vfill
\begin{minipage}{0.9\textwidth}
\textbf{Abstract.}
This technical report contains the arXiv version of the manuscript entitled
\emph{``Uniform observability criteria for linear time-varying systems''}.
It is circulated as an internal LaRSA report for dissemination and archival purposes.
\end{minipage}
\par\vspace{0.8cm}
\begin{minipage}{0.9\textwidth}
\textbf{Note.}
The pages that follow reproduce the manuscript itself.
\end{minipage}
\par\vfill
{\small \textcopyright\ 2026 Soulaimane Berkane / LaRSA, UQO\par}
\clearpage
\endgroup
\setcounter{page}{1}
\begin{frontmatter}
\title{Uniform observability criteria for linear time-varying systems}
\author[UQO,Lakehead]{Soulaimane Berkane}
\ead{soulaimane.berkane@uqo.ca}
\address[UQO]{Department of Computer Science and Engineering, Universit\'e du Qu\'ebec en Outaouais, Gatineau, QC J8X 3X7, Canada}
\address[Lakehead]{Department of Electrical Engineering, Lakehead University, Thunder Bay, ON P7B 5E1, Canada}
\begin{keyword}
Uniform observability; Linear time-varying systems; Persistence of excitation; Nonlinear observers
\end{keyword}
\begin{abstract}
Verifying uniform observability of a linear time-varying system generally requires its state-transition matrix. This paper develops necessary and sufficient conditions that can be checked directly from the coefficient matrices for two classes of systems. For constant dynamics with real spectrum and a measurable, essentially bounded output matrix, uniform observability is equivalent to persistent excitation of the output matrix on each eigenspace. An equivalent condition requires persistent excitation of the Kalman observability stack. The second result concerns cascades with a constant observable measured subsystem coupled to a hidden state. Under uniform regularity assumptions, linear combinations of output derivatives eliminate the measured state and yield an equivalent observability test for the hidden dynamics. When those dynamics have real spectrum, the test reduces to excitation on each eigenspace intersected with the kernel of the direct measurements. An application to arbitrary-order integrator chains with measurements of ranges to fixed anchors gives an exact excitation condition for the associated lifted LTV system. A second application treats direct hidden measurements and compares the condition with existing sufficient tests.
\end{abstract}
\end{frontmatter}
\raggedbottom

\section{Introduction}

Many nonlinear observer constructions lead to a linear time-varying (LTV) error system whose uniform observability (UO) must be established along the admissible trajectories. This occurs, for example, in direction- and range-aided localization \citep{HS17,BTM21,SBT26}, vision-aided inertial navigation \citep{WBT22}, and attitude and air-velocity estimation using inertial, Pitot, and barometric measurements \citep{TBH25P,TBH26,TBH26P}. Although the observability Gramian gives a precise condition, its evaluation generally requires a transition matrix that is unavailable in closed form. Consequently, applications often rely on structural lemmas that replace the Gramian by a persistence-of-excitation (PE) condition on a measured or reconstructed signal.

Uniform complete observability already appears in the linear filtering theory of \citet{KB61}. Its role in Riccati equations and time-varying stability was subsequently developed by \citet{Bucy67} and \citet{AM69}. Classical linear observability theory is presented by \citet{Chen84}, while \citet{ASP98} study observability conditions for Lyapunov stability of time-varying systems. For observer design, the practical difficulty is to obtain conditions that retain the uniform-in-time requirement while involving signals and matrices that can be checked in a given model.

Tests expressed through the coefficient matrices and output derivatives have a classical foundation in \citet{SM67}. Under suitable boundedness and regularity assumptions, \citet[Theorem~4]{BPP10} establish UO from a uniform positive lower bound on the Gram matrix of a finite output-derivative stack at every instant. \citet[Theorem~3]{BPSO17} relax this requirement to an instant in every fixed-length time window, assuming in particular that the highest derivative block is Lipschitz continuous. Averaged determinant conditions in \citet{Scandaroli13} and \citet[Proposition~1]{MES17} also permit temporary rank loss; the latter uses a uniform positive window integral of the absolute determinant of a square matrix formed from selected derivative rows. A rectangular formulation and a transfer result from derivative outputs to the original output are given in \citet{HHS20}.

The relation between excitation and dynamics has also been studied in identification and learning control. \citet{MG88} establish conditions for transferring PE between regression vectors related by linear dynamical transformations, including time-varying systems. \citet{MHM92} connect excitation to uniform complete observability through measured dynamics under regularity assumptions. More recently, \citet{LH25} study uniform observability with respect to invariant subspaces and its preservation under time-varying output feedback, with applications to synchronization.

Several observability criteria exploit additional structure in the dynamics and the output. The real-spectrum criterion in \citet[Lemma~2.7]{HS17} treats a constant observable pair with a persistently exciting output weighting. For the translational integrator chain considered in \citet[Lemma~1]{BTM21}, UO is already characterized by PE of the position-output matrix. The nilpotent-component criterion in \citet[Lemma~1]{WBT22} exploits more of the state matrix through a stack of output rows multiplied by powers of its nilpotent part. Restricted-image conditions for nilpotent dynamics appear in \citet[Lemma~3]{SBT26}. A complementary argument differentiates a measured integrator-chain output until a lower-dimensional hidden state becomes visible; this is used in \citet[Propositions~6--7]{TBD21} and, with direct hidden measurements, in \citet[Lemma~2]{TBH26}. Finite-dimensional LTV immersions for linear dynamics with quadratic outputs are developed in \citet{TBD21,BTHD23}; the latter also treats multiple quadratic measurements, while UO of the extended system depends on the input trajectory. For several measured outputs, linear combinations of their derivatives can eliminate a state through a left nullspace of an observability matrix, as in continuous-time parity relations \citep[Section~III]{Varga09}.

This paper develops necessary and sufficient observability conditions for two structured classes of LTV systems. The analysis identifies the state directions that require persistent excitation and establishes when eliminating a measured subsystem preserves UO in both directions.

The first contribution is a necessary and sufficient condition for constant state matrices with real spectrum and measurable, essentially bounded output matrices: UO is equivalent to PE of the output matrix on each eigenspace separately. The subspace conditions thus characterize observability of the full state. An equivalent averaged Kalman test uses the full stack $\operatorname{col}(C,CA,\ldots,CA^{n-1})$, without output derivatives or a prescribed factorization. For $C(t)=\Theta(t)H$ with measurable, essentially bounded $\Theta$ and an observable constant pair $(A,H)$, the exact condition is PE of $\Theta$ on each eigenspace image under $H$. This can be weaker than the full measurement-space excitation condition in \citet{HS17}. It also distinguishes the exact condition from PE of the full nilpotent-component stack in \citet{WBT22}, which requires excitation on the sum of all eigenspaces rather than on each separately.

The second contribution is an exact observability reduction for a constant observable measured subsystem coupled to hidden constant dynamics. Linear combinations of a finite stack of output derivatives eliminate the measured state. Under explicit uniform regularity assumptions, UO of the resulting hidden pair is equivalent to UO of the full cascade. The reduction permits several measured outputs and direct hidden measurements, and requires no spectral restriction. Its scalar-chain specialization supplies a converse to the sufficient reduction in \citet{TBD21}. When the hidden dynamics have real spectrum, the first characterization further restricts the required excitation to each hidden eigenspace intersected with the kernel of the direct measurement matrix.

Applications to arbitrary-order integrator chains with single or multiple range measurements and to a second-order cascade illustrate how the exact subspace requirement can be weaker than full-space PE. These conclusions concern LTV observability; nonlinear observer stability requires additional hypotheses.

Section~2 recalls the observability and derivative criteria used in the analysis. Section~3 establishes the eigenspace characterization, develops its Kalman-stack and factored-output formulations, and compares them with existing sufficient criteria. Section~4 develops the cascade reduction and illustrates direct hidden measurements. Section~5 combines both main results in a range-based state-estimation case study. The longer proofs are collected in Appendices~\ref{app:proof-real}--\ref{app:proof-cascade}.

\noindent\textbf{Notation.}
The sets of real numbers, real $n$-dimensional column vectors, and real $m\times n$ matrices are denoted by $\mathbb R$, $\mathbb R^n$, and $\mathbb R^{m\times n}$, respectively. All vector norms are Euclidean and matrix norms are induced. The transpose of a matrix is denoted by $(\cdot)^\top$, and $I_n$ is the $n\times n$ identity matrix. For symmetric matrices of the same size, $Q\succeq P$ and $Q\succ P$ mean that $Q-P$ is positive semidefinite and positive definite, respectively. For a symmetric positive semidefinite matrix $Q$, the notation $Q^{1/2}$ denotes its unique symmetric positive semidefinite square root: the matrix $R=R^\top\succeq0$ satisfying $R^2=Q$. This root exists even when $Q$ is singular and is obtained by taking the nonnegative square roots of its eigenvalues in an orthogonal diagonalization. The symbols $\ker M$, $\operatorname{im}M$, $\operatorname{rank}M$, and $\operatorname{spec}(M)$ denote its kernel, image, rank, and spectrum, respectively. The notation $\operatorname{col}(M_1,\ldots,M_k)$ denotes vertical concatenation, and $\operatorname{span}(v_1,\ldots,v_k)$ denotes the linear span. We write $e_i$ for the $i$th canonical basis vector, with dimension determined by the context, and $\mathbb S^{n-1}=\{x\in\mathbb R^n:\|x\|=1\}$. A basis matrix for a subspace has orthonormal columns unless stated otherwise.

A matrix-valued function $F$ is called measurable when each entry $F_{ij}$ is Lebesgue measurable; equivalently, $\{t\geq0:F_{ij}(t)>a\}$ is a Lebesgue-measurable set for every $i,j$ and $a\in\mathbb R$. All time integrals are Lebesgue integrals. A property holds almost everywhere if it holds outside a set of Lebesgue measure zero. The essential supremum $\operatorname*{ess\,sup}_{t\geq0}\|f(t)\|$ is the smallest bound on $\|f(t)\|$ that holds almost everywhere; $f$ is essentially bounded when this quantity is finite.

For an integer $k\geq0$, $\mathcal C_b^k$ denotes the class of functions on $[0,\infty)$ whose derivatives of orders $0,\ldots,k$ are continuous and uniformly bounded; the subscript $b$ indicates boundedness. This definition applies componentwise to matrix-valued functions. For an interval $I$, we use $\|f\|_{\mathcal L^2(I)}=(\int_I\|f(s)\|^2ds)^{1/2}$ and $\|f\|_{\mathcal L^1(I)}=\int_I\|f(s)\|ds$.

\section{Uniform observability and standard tools}

Consider the linear time-varying (LTV) system
\begin{equation}
\dot x=A(t)x,\qquad y=C(t)x,\qquad
x\in\mathbb R^n,\quad y\in\mathbb R^p,
\label{eq:ltv}
\end{equation}
where \(A(t)\) and \(C(t)\) are measurable and essentially bounded, unless stronger assumptions are explicitly stated. Solutions of the state equation are understood to be locally absolutely continuous and to satisfy the equation almost everywhere. Let \(\Phi(s,t)\) denote the state-transition matrix from time \(t\) to time \(s\). It satisfies, for almost every $s$,

\[
\frac{\partial}{\partial s}\Phi(s,t)=A(s)\Phi(s,t),
\qquad
\Phi(t,t)=I_n.
\]
Following the convention of \citet{HS17}, we use the normalized observability Gramian
\begin{equation}
 W_\delta(t):=\frac1\delta\int_t^{t+\delta}
 \Phi(s,t)^\top C(s)^\top C(s)\Phi(s,t)\,ds.
 \label{eq:gramian}
\end{equation}
\begin{defn}[Uniform observability]\label{def:uo}
The pair $(A,C)$ is \emph{uniformly observable} (UO) if there exist $\delta,\mu>0$ such that $W_\delta(t)\succeq\mu I_n$ for every $t\geq0$.
\end{defn}
For the trajectory starting from $x(t)=x_0$, this means
\[
 \frac1\delta\int_t^{t+\delta}\|y(s)\|^2ds
 =x_0^\top W_\delta(t)x_0\geq\mu\|x_0\|^2.
\]
Thus the integral of the squared output norm must have a lower bound proportional to the squared initial-state norm, uniformly in the starting time.

The same constants $\delta$ and $\mu$ must work for every initial time $t\geq0$. Under the bounded-coefficient assumptions, $W_\delta(t)$ also has a uniform upper bound for every fixed $\delta>0$. Thus Definition~\ref{def:uo} is equivalent to the usual two-sided Gramian condition for uniform complete observability \citep{KB61,AM69}.

Other notions of uniform observability are used in nonlinear observer theory \citep{GK94,Besancon07}. Throughout this paper, UO refers to the uniform-in-time LTV Gramian condition in Definition~\ref{def:uo}.

\begin{rem}[Gramian normalization]
The factor $1/\delta$ makes $x_0^\top W_\delta(t)x_0$ the average squared output norm over the observation window, so the excitation bound is expressed per unit time. The unnormalized Gramian is $\widetilde W_\delta(t):=\delta W_\delta(t)$. Hence the condition $\widetilde W_\delta(t)\succeq\alpha I_n$ for every $t\geq0$ is equivalent to Definition~\ref{def:uo}, with $\alpha=\delta\mu$. The admissible lower bound may depend on the chosen window length $\delta$.
\end{rem}

\begin{rem}[Weighted observability]
For a bounded measurable symmetric measurement weight $Q:[0,\infty)\to\mathbb R^{p\times p}$ with $Q(t)\succeq0$, define
\[
 W_{\delta,Q}(t):=\frac1\delta\int_t^{t+\delta}
 \Phi(s,t)^\top C(s)^\top Q(s)C(s)\Phi(s,t)\,ds.
\]
The positive semidefinite square-root map is continuous, so the matrix function $Q^{1/2}$ is measurable and bounded, and $W_{\delta,Q}$ is precisely the observability Gramian of $(A,Q^{1/2}C)$. Thus a uniform positive lower bound on $W_{\delta,Q}$ is equivalent to UO of this pair. If $q_-I_p\preceq Q(t)\preceq q_+I_p$ for constants $q_-,q_+>0$, it is equivalent to UO of $(A,C)$. A merely positive semidefinite weight is also admissible, provided that the weighted Gramian itself satisfies the lower bound; unweighted UO alone does not ensure it. This distinction is used in the Riccati framework of \citet[Lemma~2.6]{HS17}, which also imposes a condition on the process-weight Gramian. Weighted formulations also appear in \citet[Definition~2]{Bucy67}.
\end{rem}

\begin{rem}[Riccati observers]
Measurable coefficients are compatible with the Kalman--Bucy framework; see \citet[Section~2]{AC19} and \citet[Theorem~1]{FTR08}. Riccati solutions are then locally absolutely continuous and satisfy the equation almost everywhere. For the observer based on
\[
 \dot P=AP+PA^\top+V-PC^\top QCP,\qquad P(0)\succ0,
\]
UO together with bounded measurable symmetric uniformly positive definite weights $Q,V$ ensures uniform Riccati bounds and exponential stability of the homogeneous error $\dot e=(A-PC^\top QC)e$; see \citet[Lemma~2.6]{HS17}. Corrected bounds for solutions of the continuous Riccati equation under uniform weighted observability and controllability conditions are given by \citet{PRV01}. The observability criteria below address the measurement condition in this result; observer convergence also depends on the weight and model assumptions.
\end{rem}

Successive output derivatives provide standard tests when the transition matrix is not available; their classical differential formulation is developed in \citet{SM67}. For an integer $K\geq1$ and coefficients $A\in\mathcal C_b^K$, $C\in\mathcal C_b^{K+1}$, define the derivative stack by
\begin{equation}
 \begin{aligned}
 N_0(t)&:=C(t),\\
 N_{k+1}(t)&:=\dot N_k(t)+N_k(t)A(t),\quad 0\leq k<K,\\
 \mathcal O_K(t)&:=\operatorname{col}(N_0(t),\ldots,N_K(t)).
 \end{aligned}
 \label{eq:derivative-stack}
\end{equation}
Under this regularity, the $k$th output derivative along \eqref{eq:ltv} is $N_kx$, and $N_0,\ldots,N_K$ and their first time derivatives are uniformly bounded.

The identity $\operatorname{col}(y,\dot y,\ldots,y^{(K)})=\mathcal O_K(t)x$ relates the derivative stack to the state. A bound $\mathcal O_K(t)^\top\mathcal O_K(t)\succeq\alpha I_n$ means that $\sum_{k=0}^K\|N_k(t)x\|^2\geq\alpha\|x\|^2$ for all $t$ and $x$. Under the stated regularity, this gives the following sufficient condition for UO \citep[Theorem~4]{BPP10}.
\begin{lem}[Uniform derivative-rank criterion; {\citealp{BPP10}}]\label{lem:pointwise-rank}
Let $K\geq1$ be an integer, assume $A\in\mathcal C_b^K$ and $C\in\mathcal C_b^{K+1}$, and define $\mathcal O_K$ by \eqref{eq:derivative-stack}. If there exists $\alpha>0$ such that
\[
 \mathcal O_K(t)^\top\mathcal O_K(t)
 =\sum_{k=0}^K N_k(t)^\top N_k(t)\succeq\alpha I_n,
 \qquad t\geq0,
\]
then $(A,C)$ is UO.
\end{lem}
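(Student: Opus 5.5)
The plan is to argue by contradiction, using a compactness argument together with a Taylor-type estimate along trajectories. Fix a window length $\delta>0$. Suppose no $\mu>0$ works. Then there are times $t_j\geq0$ and unit vectors $x_j\in\mathbb S^{n-1}$ such that
\[
\frac1\delta\int_{t_j}^{t_j+\delta}\|C(s)\Phi(s,t_j)x_j\|^2ds\to0 .
\]
Write $z_j(s)=\Phi(s,t_j)x_j$. Then $y_j(s)=C(s)z_j(s)$, and $y_j^{(k)}(s)=N_k(s)z_j(s)$ for $k\leq K$. Since $A$ is bounded, Gr\"onwall gives $\|z_j(s)\|\in[e^{-a\delta},e^{a\delta}]$ on the window, where $a=\sup\|A\|$. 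Consequently the pointwise hypothesis yields $\sum_k\|y_j^{(k)}(s)\|^2\geq\alpha e^{-2a\delta}$ on the whole window.

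Rather than extracting limits, I would obtain a direct quantitative contradiction from a Landau--Kolmogorov/Markov-type inequality on a fixed interval. The key bounds are as follows. By the regularity assumptions, $N_K$ is $\mathcal C^1$ with bounded derivative, and $\dot N_K+N_KA$ is bounded. Hence
\[
\|y_j^{(K+1)}(s)\|=\|(\dot N_K+N_KA)z_j\|\leq M
\]
uniformly in $j$ and $s$. Now take $f=y_j$ on $I=[t_j,t_j+\delta]$, with $\|f\|_{\mathcal L^2(I)}=\varepsilon_j\to0$ and $\|f^{(K+1)}\|_\infty\leq M$. A standard interpolation inequality on a bounded interval then shows that $\|f^{(k)}\|_{\mathcal L^2(I)}\to0$ for each $k\leq K$. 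I would prove this by approximating $f$ with its Taylor polynomial of degree $K$ at the left endpoint. The remainder is bounded by $M\delta^{K+1}/(K+1)!$. If this remainder is not small enough, I would first shrink to subintervals of length $h$. On polynomials of degree $\leq K$, all norms are equivalent on $[0,h]$, with constants depending only on $h$ and $K$, so small $\mathcal L^2$ norm forces small coefficients up to an $O(Mh^{K+1-k})$ error.

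Integrating the pointwise bound now gives
\[
\alpha e^{-2a\delta}\,\delta\leq\sum_{k=0}^K\|y_j^{(k)}\|_{\mathcal L^2(I)}^2 .
\]
To close the contradiction I would choose $h$ first and then $j$. Split $I$ into subintervals of length $h$. On each one, the derivatives of orders $k\leq K$ are bounded in $\mathcal L^2$ by
\[
C_h\|f\|_{\mathcal L^2}+C'Mh^{K+1-k}\sqrt h .
\]
Summing over the subintervals gives a total of at most
\[
C_h'\varepsilon_j^2+C''M^2\delta\,h^{2} ,
\]
where we use $K+1-k\geq1$. Choosing $h$ small makes the second term less than $\tfrac12\alpha e^{-2a\delta}\delta$. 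Letting $j\to\infty$ then kills the first term, which gives the contradiction. Therefore $W_\delta(t)\succeq\mu I_n$ for some $\mu>0$ independent of $t$, and $(A,C)$ is UO.

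The main obstacle is this interpolation step: the derivatives must be controlled by the $\mathcal L^2$ norm of $y$ plus a term that is small in $h$, with every constant uniform in $t_j$. The needed uniformity comes only from the $\mathcal C_b$ bounds on $A$ and $C$. These bounds make $M$ and $a$ independent of $j$, and the polynomial norm-equivalence constants depend only on $h$ and $K$, not on the time shift.
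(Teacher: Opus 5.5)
The paper gives no proof of this lemma; it is quoted from \citet[Theorem~4]{BPP10}. Your argument is correct and self-contained, so the useful comparison is with the closest argument the paper does contain: the sufficiency part of Appendix~\ref{app:proof-cascade}. There the same key implication (a small $\mathcal L^2$ norm of the output on a fixed window forces small $\mathcal L^2$ norms of its derivatives) is obtained without constants. A subsequence is extracted, Arzel\`a--Ascoli gives uniform limits $g_k$ of the derivatives, and $g_0=0$ propagates to every $g_k$ through $g_k(s)=g_k(0)+\int_0^s g_{k+1}$.

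That compactness route needs only equicontinuity of the highest derivative, which is why it accommodates mere uniform continuity of $R_\nu$ in Theorem~\ref{thm:cascade}. Your Taylor-polynomial interpolation instead uses the Lipschitz bound $\|y^{(K+1)}\|\leq M$, which the hypotheses $A\in\mathcal C_b^K$, $C\in\mathcal C_b^{K+1}$ supply. In exchange, you get explicit constants and avoid subsequences; the argument would also adapt to a modulus of continuity, with an error of order $\omega(h)h^{K-k}$. In fact the contradiction framing is superfluous. Once $h$ is fixed so that $C''M^2\delta h^2\leq\tfrac12\alpha e^{-2a\delta}\delta$, your final inequality directly gives $\|y\|_{\mathcal L^2(I)}^2\geq\alpha e^{-2a\delta}\delta/(2C_h')$ uniformly in the initial time. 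This proves UO for every window length $\delta>0$.

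One step should be made explicit. The norm-equivalence constant for $P\mapsto P^{(k)}$ on polynomials of degree at most $K$ over an interval of length $h$ is $c_Kh^{-k}$, obtained by rescaling to $[0,1]$. This scaling is what turns the contribution $c_Kh^{-k}\|\rho\|_{\mathcal L^2}\leq c_Kh^{-k}Mh^{K+1}\sqrt h/(K+1)!$ of the Taylor remainder $\rho$ into a term $O(Mh^{K+1-k}\sqrt h)$ with an $h$-independent constant. A constant that merely ``depends on $h$ and $K$'', as you phrase it, would not justify sending $h\to0$. With that scaling stated, and $h=\delta/n\leq1$ so that $h^{2(K+1-k)}\leq h^2$, the estimate is complete.
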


Requiring the derivative stack to distinguish all directions at every instant can be restrictive when measurements vary with time. The next condition permits temporary rank loss: it requires the determinant of the Gram matrix of selected derivative rows to have a positive average on every observation window. This criterion appears in \citet[Lemma~3.1 and Appendix~A.1]{Scandaroli13}; \citet[Proposition~1]{MES17} give a square-matrix formulation. We use the rectangular version stated in \citet[Proposition~2.4, C1]{HHS20}.
\begin{lem}[Averaged determinant criterion; {\citealp{Scandaroli13,MES17}}]\label{lem:determinant}
Let $K\geq1$ be an integer, assume $A\in\mathcal C_b^K$ and $C\in\mathcal C_b^{K+1}$, and define $\mathcal O_K$ by \eqref{eq:derivative-stack}. Let $M(t)\in\mathbb R^{r\times n}$, $r\geq n$, consist of a fixed selection of rows of $\mathcal O_K(t)$. If there exist $\delta,\eta>0$ such that
\[
 \frac1\delta\int_t^{t+\delta}\det\bigl(M(s)^\top M(s)\bigr)ds
 \geq\eta,\qquad t\geq0,
\]
then $(A,C)$ is UO.
\end{lem}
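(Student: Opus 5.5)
We argue by contradiction through a compactness argument that links the averaged determinant to the Gramian over a fixed window. Suppose $(A,C)$ is not UO for the given $\delta$. Then for every $\mu>0$ there exist $t\geq0$ and $x_0\in\mathbb S^{n-1}$ with $x_0^\top W_\delta(t)x_0<\mu$. Set $x(s)=\Phi(s,t)x_0$ and $y(s)=C(s)x(s)$; then $\|y\|_{\mathcal L^2([t,t+\delta])}^2<\delta\mu$. Since $A$ is bounded, $\|x(s)\|$ stays between $e^{-a\delta}$ and $e^{a\delta}$ on the window, where $a=\sup\|A\|$. The goal is to show that a small output energy forces $\int_t^{t+\delta}\det(M^\top M)\,ds$ to be small, uniformly in $t$. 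This contradicts the hypothesis once $\mu$ is small enough.

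\textbf{Step 1: small output forces small derivatives.} Write $z=\operatorname{col}(y,\dot y,\ldots,y^{(K)})=\mathcal O_K x$ on the window. Each derivative $y^{(k)}=N_kx$ has a bounded derivative $y^{(k+1)}$ for $k<K$. The top derivative $y^{(K)}=N_Kx$ is Lipschitz, since $\dot N_K$ is bounded because $C\in\mathcal C_b^{K+1}$ and $A\in\mathcal C_b^K$. All these bounds are uniform in $t$. We then use a Landau--Kolmogorov (Gagliardo--Nirenberg) interpolation inequality on an interval of fixed length $\delta$. If a function has small $\mathcal L^2$ norm and its next derivative is uniformly bounded, its intermediate derivatives are small in $\mathcal L^2$; the bound is $\|y^{(k)}\|_{\mathcal L^2}\le c\,\varepsilon^{\theta_k}$ with $\theta_k>0$ and $c$ independent of $t$. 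Applying this successively for $k=1,\ldots,K$ gives $\|\mathcal O_K x\|_{\mathcal L^2([t,t+\delta])}\le c'\mu^{\theta}$ for some $\theta>0$.

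\textbf{Step 2: from small $\mathcal O_Kx$ to small determinant.} Because $M$ is a row selection, $\|M(s)x(s)\|\le\|\mathcal O_K(s)x(s)\|$. Let $\lambda_{\min}(s)$ be the smallest eigenvalue of $M(s)^\top M(s)$. Then $\lambda_{\min}(s)\|x(s)\|^2\le\|M(s)x(s)\|^2$. The remaining eigenvalues are bounded by $m^2$, where $m=\sup\|M\|$. Hence
\[
\det\bigl(M(s)^\top M(s)\bigr)\le m^{2(n-1)}\lambda_{\min}(s)\le m^{2(n-1)}e^{2a\delta}\|M(s)x(s)\|^2 .
\]
Integrating over the window gives $\int_t^{t+\delta}\det(M^\top M)\,ds\le C''\mu^{2\theta}$. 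For small $\mu$ this is less than $\delta\eta$, which is the desired contradiction. Tracing the constants turns the argument directly into an explicit $\mu$, so the proof becomes constructive.

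\textbf{Main obstacle.} The delicate step is Step 1, which requires the uniform interpolation estimate with constants depending only on $\delta$ and on the $\mathcal C_b$ bounds. We will prove it by induction on $k$ using the elementary inequality
\[
\|f'\|_{\mathcal L^2(I)}^2\le C\bigl(\|f\|_{\mathcal L^2(I)}\,\|f''\|_{\mathcal L^2(I)}+\|f\|_{\mathcal L^2(I)}^2/|I|^2\bigr),
\]
obtained by integration by parts with a cutoff or by Taylor expansion on subintervals. At the top level $f''$ is replaced by a Lipschitz bound, giving an $\mathcal L^\infty$-type estimate for $f'$. This is precisely where the Lipschitz regularity of $N_K$ is needed.
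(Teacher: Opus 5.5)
Your proof is correct, and it takes a different route from the one the paper points to. The paper gives no proof of this lemma and relies on the cited works. The argument it indicates has two stages. First, the pointwise bound $\det(M(s)^\top M(s))\leq\sup_\tau\|M(\tau)\|^{2(n-1)}\lambda_{\min}(M(s)^\top M(s))$ makes the derivative-output pair $(A,M)$ UO. Second, Lemma~\ref{lem:transfer} transfers UO back to $C$. That transfer is argued by contradiction along sequences of initial times and unit states with vanishing output energy, and a compactness step then forces the derivatives to vanish too. The paper reproduces this strategy, via Arzel\`a--Ascoli, in the sufficiency part of Appendix~\ref{app:proof-cascade}.

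Your Step~2 is exactly the first stage. Your Step~1 replaces the compactness argument by the interval interpolation inequality applied to $f=y,\dot y,\ldots,y^{(K-1)}$. This is legitimate because $N_K\in\mathcal C_b^1$ makes $y^{(K+1)}=(\dot N_K+N_KA)x$ exist and be bounded uniformly in $t$ and $x_0$, so every factor $\|f''\|_{\mathcal L^2}$ is uniformly bounded. At the top level, that is the only role of the Lipschitz bound on $y^{(K)}$; no $\mathcal L^\infty$ estimate for $f'$ is needed.

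Your route buys an explicit, direct estimate. Iterating gives $\|y^{(k)}\|_{\mathcal L^2}\leq c\|y\|_{\mathcal L^2}^{2^{-k}}$ when $\|y\|_{\mathcal L^2}\leq1$. Hence $x_0^\top W_\delta(t)x_0\geq\delta^{-1}\min\{1,(\delta\eta/c')^{2^K}\}$ on the same window, with constants depending only on $\delta$, $n$, $K$ and the $\mathcal C_b$ bounds, so you can drop the contradiction framing altogether.

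The compactness route is non-constructive but more modular and more tolerant of low regularity. It isolates Lemma~\ref{lem:transfer} as a standalone tool, which also underlies Corollary~\ref{cor:derivative-average}. It needs only equicontinuity of the top derivative, which is what allows a merely uniformly continuous $R_\nu$ in Theorem~\ref{thm:cascade}. In that setting your inequality, which uses a bound on $f''$, would have to be replaced by a modulus-of-continuity estimate for $f'$.
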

For square $M$, this is equivalent to the averaged $|\det M|$ condition in \citet{MES17}.
The derivative rows can also be treated as an output matrix in their own right. Under the regularity assumed above, UO with these derivative outputs implies UO with the original output. This observation, used in the proof of \citet{MES17} and stated explicitly by \citet{HHS20}, provides a way to apply other observability tests to selected derivative rows.
\begin{lem}[Observability from output derivatives; {\citealp[Lemma~2.3]{HHS20}}]\label{lem:transfer}
Let $K\geq1$ be an integer, assume $A\in\mathcal C_b^K$ and $C\in\mathcal C_b^{K+1}$, and define $\mathcal O_K$ by \eqref{eq:derivative-stack}. Let $M(t)$ consist of a fixed selection of rows of $\mathcal O_K(t)$. If $(A,M)$ is UO, then $(A,C)$ is UO.
\end{lem}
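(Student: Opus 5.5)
We prove the contrapositive through a bound in the opposite direction: the Gramian of $(A,C)$ controls the Gramian of $(A,M)$ up to constants. Fix $t\geq0$ and $x_0\in\mathbb R^n$, and let $x(s)=\Phi(s,t)x_0$. Along this trajectory $y(s)=C(s)x(s)$. By the regularity assumptions and the recursion \eqref{eq:derivative-stack}, $y$ is $K$ times differentiable, with $y^{(k)}(s)=N_k(s)x(s)$. Each row of $M(s)x(s)$ is therefore a component of some $y^{(k)}(s)$ with $k\leq K$. Writing $\gamma:=\sum_{k\le K}\frac1\delta\int_t^{t+\delta}\|y^{(k)}\|^2ds$, we get $x_0^\top W^M_\delta(t)x_0\leq\gamma$. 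It therefore suffices to bound each $\frac1\delta\int\|y^{(k)}\|^2$ on a window by a constant times $\frac1{\delta'}\int\|y\|^2$ on a (possibly longer) window starting at $t$.

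The key step is an interpolation (Landau--Kolmogorov/Ehrling type) inequality on a finite interval $I$ of length $\delta$. For a function $f\in C^{K+1}(I)$,
\[
\|f^{(k)}\|_{\mathcal L^2(I)}\leq c\bigl(\|f\|_{\mathcal L^2(I)}+\|f^{(K+1)}\|_{\mathcal L^2(I)}\bigr),\qquad k\leq K,
\]
with $c$ depending only on $\delta$ and $K$. We need a top-order term that is small, not merely bounded, so we use the scaled version: for any $\epsilon>0$ there is $c_\epsilon$ such that $\|f^{(k)}\|\le c_\epsilon\|f\|+\epsilon\|f^{(K+1)}\|$. Here $y^{(K+1)}=N_{K+1}x$, where $N_{K+1}:=\dot N_K+N_KA$ is bounded because $C\in\mathcal C_b^{K+1}$ and $A\in\mathcal C_b^K$. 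The bounded-coefficient assumption also gives $\|\Phi(s,t)\|\leq e^{a\delta}$ on the window. Hence $\|y^{(K+1)}\|_{\mathcal L^2}^2\leq \beta\delta\|x_0\|^2$ with $\beta$ uniform in $t$.

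Now suppose $(A,M)$ is UO with constants $\delta,\mu$, so that $\mu\|x_0\|^2\le\gamma$. The interpolation inequality gives
\[
\mu\|x_0\|^2\le \gamma\le c'_\epsilon\, x_0^\top W_\delta(t)x_0+\epsilon^2 c''\beta\|x_0\|^2 .
\]
Choosing $\epsilon$ small enough that $\epsilon^2c''\beta\le\mu/2$ yields $x_0^\top W_\delta(t)x_0\ge \mu/(2c'_\epsilon)\|x_0\|^2$ for every $t$, which is UO of $(A,C)$ with the same window length $\delta$.

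The main obstacle is establishing the interpolation inequality with constants uniform in the window position. Translation invariance reduces it to the fixed interval $[0,\delta]$. We would obtain the scaled version by splitting $[0,\delta]$ into subintervals of length $h$. On each subinterval we would apply the polynomial-approximation estimate: the Taylor polynomial of degree $K$ is controlled in $\mathcal L^2$ norm by $\|f\|$ plus a remainder bounded by $h^{K+1-k}\|f^{(K+1)}\|$. We would then take $h$ small, and use equivalence of norms on the finite-dimensional space of polynomials of degree at most $K$ to bound the derivatives of the polynomial part.
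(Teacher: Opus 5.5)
Your argument is correct, but it takes a different route from the one the paper relies on. The paper does not prove this lemma; it cites it. The one place where it proves the same kind of implication is the sufficiency part of Appendix~\ref{app:proof-cascade}, which it says follows the strategy of the cited works. That proof argues by contradiction: it takes times $t_p$ and unit initial states with vanishing output energy on a fixed window, and passes to a convergent subsequence of initial states. It then applies Arzel\`a--Ascoli to the output derivatives. These are uniformly bounded, and they are equicontinuous because the next derivative is bounded or, at top order, because the top coefficient is uniformly continuous. The limits satisfy $g_0=0$ and $g_k(s)=g_k(0)+\int_0^s g_{k+1}$, so all of them vanish. This forces the derivative-output energy to zero, contradicting UO of the derivative outputs.

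You replace this compactness step by a quantitative Ehrling-type inequality, $\|f^{(k)}\|_{\mathcal L^2}\leq c_\epsilon\|f\|_{\mathcal L^2}+\epsilon\|f^{(K+1)}\|_{\mathcal L^2}$. Your subinterval and Taylor sketch does establish it. On each piece $J$ of length $h$ it gives $\|f^{(k)}\|_{\mathcal L^2(J)}\leq c\,h^{-k}\|f\|_{\mathcal L^2(J)}+c'h^{K+1-k}\|f^{(K+1)}\|_{\mathcal L^2(J)}$, and $K+1-k\geq1$ for $k\leq K$. You then absorb the top-order term using the uniform bound on $N_{K+1}=\dot N_K+N_KA$.

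Your route yields an explicit lower bound $\mu/(2c'_\epsilon)$ on the same window $\delta$, computable from $\delta$, $K$, $\sup\|A\|$ and $\sup\|N_{K+1}\|$. The compactness argument also keeps the window but gives no constant. In exchange, you need a bound on the $(K+1)$th output derivative, which is exactly where $C\in\mathcal C_b^{K+1}$ and $A\in\mathcal C_b^K$ enter. The Arzel\`a--Ascoli argument needs only equicontinuity of the highest derivative actually used. That is why the paper adopts it for Theorem~\ref{thm:cascade}, where $R_\nu$ is merely uniformly continuous; Remark~\ref{rem:chirp} shows that boundedness alone fails there.

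A minor wording point: what you prove is a direct estimate, not a contrapositive.
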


If $A$ is constant with real spectrum, the preceding lemma can be combined with the result of \citet[Lemma~2.7]{HS17}, with its constant sensing matrix equal to $I_n$, to obtain an averaged matrix test for the selected derivative rows.
\begin{cor}[Averaged derivative-output criterion; {\citealp[Proposition~2.4, C2]{HHS20}}]\label{cor:derivative-average}
Let $K\geq1$ be an integer, let $A\in\mathbb R^{n\times n}$ be constant with real spectrum, and assume $C\in\mathcal C_b^{K+1}$. Define $\mathcal O_K$ by \eqref{eq:derivative-stack}. If a fixed selection $M$ of rows of $\mathcal O_K$ satisfies
\[
 \frac1\delta\int_t^{t+\delta}M(s)^\top M(s)ds
 \succeq\alpha I_n,\qquad t\geq0,
\]
for some $\delta,\alpha>0$, then $(A,C)$ is UO.
\end{cor}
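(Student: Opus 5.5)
The plan is to combine Lemma~\ref{lem:transfer} with \citet[Lemma~2.7]{HS17}, as the sentence preceding the corollary suggests. Since $A$ is constant, $A\in\mathcal C_b^K$ holds trivially. With $C\in\mathcal C_b^{K+1}$, the recursion \eqref{eq:derivative-stack} then gives $N_k\in\mathcal C_b^{K+1-k}$; in particular, every selected row of $\mathcal O_K$ is continuous and uniformly bounded. The fixed selection $M$ is therefore a bounded measurable output matrix, and Lemma~\ref{lem:transfer} reduces the claim to proving that the pair $(A,M)$ is UO.

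To prove UO of $(A,M)$, I would write $M(t)=M(t)I_n$. This is the factored form $C(t)=\Theta(t)H$ with $\Theta=M$ and constant sensing matrix $H=I_n$. The pair $(A,I_n)$ is trivially observable, and $A$ has real spectrum by hypothesis. The averaged hypothesis
\[
\frac1\delta\int_t^{t+\delta}M(s)^\top M(s)\,ds\succeq\alpha I_n
\]
is exactly persistent excitation of $\Theta$ on the full measurement space $\operatorname{im}H=\mathbb R^n$. These are the hypotheses of \citet[Lemma~2.7]{HS17}, whose conclusion is UO of $(A,\Theta H)=(A,M)$. Lemma~\ref{lem:transfer} then yields UO of $(A,C)$.

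The main point to check is that the hypotheses of \citet[Lemma~2.7]{HS17} match exactly. Specifically, that lemma requires $\Theta$ to be bounded (and possibly continuous), which holds here because the rows of $M$ lie in $\mathcal C_b^{1}$. It also requires the PE window and constants to be uniform in $t$, which is given.

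If one preferred a self-contained argument instead of citing HS17, I would argue by contradiction. Suppose there are $t_j$ and unit vectors $x_j$ with $\int_{t_j}^{t_j+\delta}\|M(s)e^{A(s-t_j)}x_j\|^2ds\to0$. Using the real Jordan structure of $A$, I would split $e^{A\tau}x_j$ into polynomial-times-exponential components; boundedness of $M$ and a compactness argument on the shifted windows would then contradict PE. This step, which controls the coupling between the different Jordan blocks on a fixed window, is where the real-spectrum hypothesis enters and would be the genuine obstacle. Citing the established lemma avoids it.
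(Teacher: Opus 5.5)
Your proposal is correct and follows the paper's own route: the paper's justification is the sentence preceding the corollary, which combines Lemma~\ref{lem:transfer} with \citet[Lemma~2.7]{HS17} taking $\Theta=M$ and $H=I_n$, exactly as you do, and your regularity check ($N_k\in\mathcal C_b^{K+1-k}$, so $M$ is bounded) is the only detail needed. The self-contained fallback you sketch is also supplied within the paper by Theorem~\ref{thm:real}, or equivalently Corollary~\ref{cor:hs17}, since PE of $M$ on $\mathbb R^n$ gives PE on every eigenspace of $A$.
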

This averaged matrix condition permits excitation to accumulate in different directions at different times. The determinant condition instead requires full column rank on a sufficient part of each window. The next section introduces persistence of excitation to describe such averaged inequalities and identifies the exact subspaces on which excitation is required for constant real-spectrum dynamics.

\section{Constant dynamics with real spectrum}

When the dynamics are constant, the restriction of the state transition to an eigenspace is a scalar exponential. This observation suggests examining how the output matrix measures fixed vectors in each eigenspace. We express the required uniform lower bounds through persistence of excitation.

\begin{defn}[Persistence of excitation on a subspace]\label{def:pe}
Let $E\subseteq\mathbb R^n$ be a fixed linear subspace. A measurable, essentially bounded matrix function $F:[0,\infty)\to\mathbb R^{r\times n}$ is \emph{persistently exciting on $E$}, or \emph{PE on $E$}, if there exist $\delta,\mu>0$ such that
\begin{equation}
 \frac1\delta\int_t^{t+\delta}\|F(s)v\|^2\,ds
 \geq\mu\|v\|^2,\qquad t\geq0,\quad v\in E.
 \label{eq:pe}
\end{equation}
\end{defn}
If $V\in\mathbb R^{n\times d}$ has orthonormal columns spanning $E$, where $d=\dim E>0$, condition~\eqref{eq:pe} is equivalent to
\begin{equation}
 \frac1\delta\int_t^{t+\delta}V^\top F(s)^\top F(s)V\,ds
 \succeq\mu I_d,\qquad t\geq0.
 \label{eq:pebasis}
\end{equation}
Indeed, writing $v=Vw$ gives $\|v\|=\|w\|$ and converts \eqref{eq:pe} into the quadratic-form inequality associated with \eqref{eq:pebasis}. The condition is vacuous for $E=\{0\}$. Unless a subspace is specified, PE means PE on the full domain of $F$. The constants must be uniform over both the initial time and the vectors in the subspace. In particular, the vector $v$ in \eqref{eq:pe} is fixed throughout the integration interval, whereas the observability Gramian measures its evolution under the state transition.

Subspace formulations of excitation also arise in adaptive control; see, for example, \citet{MUB25}. Definition~\ref{def:pe} is the usual PE condition applied to $FV$, where $V$ is a basis matrix for the chosen subspace. This use of subspaces differs from the uniform observability with respect to invariant subspaces studied by \citet{LH25}. Here the objective remains UO on the full state space: the subspaces specify which directions of the output matrix must be persistently excited to characterize that property.

\begin{rem}[PE does not by itself imply UO]\label{rem:rotation}
Take $A=\left[\begin{smallmatrix}0&-1\\1&0\end{smallmatrix}\right]$ and $C(t)=[\cos t,\sin t]$. The average of $C^\top C$ on every window of length $\pi$ is $I_2/2$, so $C$ is PE. However, $C(t+s)e^{As}=[\cos t,\sin t]$, and every observability Gramian has rank one. Thus excitation of fixed vectors can be canceled by the state dynamics. A related rotation counterexample is discussed in \citet[Section~4.3]{BPSO17}.
\end{rem}

Assume in this section that $A$ is constant and all its eigenvalues are real. For each $\lambda\in\operatorname{spec}(A)$, let $E_\lambda:=\ker(A-\lambda I_n)$. Since $e^{As}v=e^{\lambda s}v$ for $v\in E_\lambda$, UO necessarily requires PE of $C$ on each $E_\lambda$. The substantive point is that these conditions also suffice for the full state, even when the eigenspaces do not span it. The dynamics reveal the generalized-eigenvector directions, so no additional excitation test on the generalized eigenspaces is needed. The following theorem establishes this conclusion under the same bounded measurable output assumptions used in the Gramian definition.

\begin{thm}[Eigenspace characterization]\label{thm:real}
Let $A\in\mathbb R^{n\times n}$ be constant with $\operatorname{spec}(A)\subset\mathbb R$, and let $C:[0,\infty)\to\mathbb R^{p\times n}$ be measurable and essentially bounded. The pair $(A,C(t))$ is UO if and only if $C$ is PE on $E_\lambda=\ker(A-\lambda I_n)$ for every $\lambda\in\operatorname{spec}(A)$.
\end{thm}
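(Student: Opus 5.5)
The proof has two directions. Necessity is direct. Sufficiency I would argue by contradiction, using a weak-* compactness step followed by an asymptotic ``dominant term'' argument that exploits the real spectrum.

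\emph{Necessity.} Let $(A,C)$ be UO with constants $\delta,\mu$, and fix $v\in E_\lambda$. Then $\Phi(s,t)v=e^{\lambda(s-t)}v$. Hence
\[
\mu\|v\|^2\le v^\top W_\delta(t)v=\frac1\delta\int_t^{t+\delta}e^{2\lambda(s-t)}\|C(s)v\|^2ds\le e^{2|\lambda|\delta}\,\frac1\delta\int_t^{t+\delta}\|C(s)v\|^2ds .
\]
So $C$ is PE on $E_\lambda$ with constants $\delta$ and $\mu e^{-2|\lambda|\delta}$.

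\emph{Sufficiency, compactness step.} First I would make the window common to all eigenvalues. PE on a window $\delta_\lambda$ implies PE on any longer window, since a longer window contains a full subwindow of length $\delta_\lambda$ and the averaging constant changes by a bounded factor. So I fix one $\delta$ with constant $\mu$ valid for all $\lambda$.

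Suppose $(A,C)$ is not UO. Then for every window length the infimum of the unnormalized Gramian's lower eigenvalue over $t$ is zero. So for each $k$ there are $t_k\ge0$ and unit vectors $x_k$ with
\[
\int_0^{k\delta}\|C(t_k+\tau)e^{A\tau}x_k\|^2d\tau\le 1/k .
\]
Set $\Sigma_k(\tau):=C(t_k+\tau)^\top C(t_k+\tau)$. These functions are uniformly bounded in $\mathcal L^\infty$. By weak-* compactness on each $[0,N\delta]$ and a diagonal argument, a subsequence satisfies $\Sigma_k\to\Sigma$ weak-* on every compact interval, and $x_k\to x_*$ with $\|x_*\|=1$.

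The crucial point is that the quadratic form is linear in $\Sigma_k$, while $g_k(\tau)=e^{A\tau}x_k\to g(\tau)=e^{A\tau}x_*$ uniformly on compacts. Therefore
\[
\int_0^{N\delta}g^\top\Sigma g\,d\tau=0\quad\text{for every }N .
\]
Moreover $\Sigma\succeq0$ almost everywhere, $\Sigma$ is bounded, and $\Sigma$ inherits $\int_{a}^{a+\delta}v^\top\Sigma v\ge\mu\delta\|v\|^2$ for all $a\ge0$ and all $v\in E_\lambda$. This holds because, for fixed $v$, these are weak-* continuous linear functionals. Consequently $\Sigma^{1/2}(\tau)e^{A\tau}x_*=0$ for almost every $\tau\ge0$.

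\emph{Sufficiency, dominant-term step (the main obstacle).} It remains to show that this forces $x_*=0$, a contradiction. Write
\[
e^{A\tau}x_*=\sum_{\lambda}e^{\lambda\tau}\sum_{j}\frac{\tau^j}{j!}(A-\lambda I)^jP_\lambda x_*,
\]
where $P_\lambda$ are the spectral projections. Among the nonzero terms, choose the pair $(\lambda,j)$ that is lexicographically maximal: largest real $\lambda$ first, then largest $j$. The corresponding coefficient $v:=(A-\lambda I)^jP_\lambda x_*/j!$ lies in $E_\lambda$ because $j$ is maximal, and $v\ne0$ since $x_*\ne0$. The real spectrum is essential here, since it guarantees that a single term dominates rather than oscillating.

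Then $h(\tau):=e^{-\lambda\tau}\tau^{-j}e^{A\tau}x_*=v+r(\tau)$ with $\|r(\tau)\|=O(1/\tau)$. We still have $\Sigma^{1/2}h=0$ almost everywhere. On the window $[m\delta,(m+1)\delta]$ this gives
\[
\Bigl(\int v^\top\Sigma v\Bigr)^{1/2}=\|\Sigma^{1/2}(h-r)\|_{\mathcal L^2}=\|\Sigma^{1/2}r\|_{\mathcal L^2}\le\|\Sigma\|_\infty^{1/2}\,\delta^{1/2}\,O(1/m) .
\]
Combined with the inherited lower bound $\int v^\top\Sigma v\ge\mu\delta\|v\|^2$, this is impossible for large $m$.

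The delicate points are choosing the sequence so that the \emph{unnormalized} integrals on growing windows vanish (needed for an infinite horizon in the limit), and checking that the PE inequality passes to the weak-* limit. I expect both to go through as described.
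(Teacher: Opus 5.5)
Your proof is correct. Your necessity argument is the paper's, with $|\lambda|$ in place of $\lambda_+$. For sufficiency you use the same dominant-term decomposition, with $v\in E_\lambda$ selected by the lexicographic maximum over $(\lambda,j)$. The difference is how you obtain uniformity.

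\textbf{The paper's route.} It argues directly. For each fixed unit $x$ it chooses $r_x$ so that $\|C\|_\infty\|\eta_x(s)\|$ is small for $s\geq r_x$. It then applies PE on $E_{\lambda_x}$ at the shifted initial time $t+r_x$, which gives a bound on $[0,r_x+\delta]$ that is already uniform in $t$. Finally, a Lipschitz estimate in $x$ and a finite subcover of $\mathbb S^{n-1}$ make the bound uniform in $x$ as well.

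\textbf{Your route.} You argue by contradiction and put all the uniformity into one compactness step. The translated weights $C(t_k+\cdot)^\top C(t_k+\cdot)$ have a weak-* limit in $\mathcal L^\infty$; bounded sets are weak-* sequentially compact because $\mathcal L^1$ is separable. Together with $x_k\to x_*$, this yields a bounded, a.e.\ positive semidefinite weight $\Sigma$ with three properties:
\begin{itemize}
\item it inherits the eigenspace PE bounds, since these are weak-* continuous functionals;
\item it annihilates $e^{A\tau}x_*$ on the whole half-line;
\item consequently the dominant-term argument needs to be run only once, on a single trajectory over an infinite horizon, with no need to tune a remainder threshold $r_x$ against $\|C\|_\infty$ uniformly in $t$.
\end{itemize}
The points you flagged as delicate all go through. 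Growing windows $k\delta$ with tolerance $1/k$ follow from the negation of UO. The quadratic form passes to the limit because $e^{A\tau}x_k\to e^{A\tau}x_*$ uniformly on compacts. Positivity of $\Sigma$ holds a.e.\ by testing against a countable dense set of vectors.

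\textbf{What each buys.} Your limiting-weight argument is clean and would adapt to other settings where the excitation hypothesis is preserved under weak-* limits of time translates. Its cost is Banach--Alaoglu, a diagonal extraction and the a.e.\ positivity check. Being a contradiction argument, it also gives no information on the window or the bound. The paper's route is fully elementary and never leaves the original $C$. It exhibits the structure of the window, $\delta_o=\max_i(r_{x_i}+\delta)$, through the finite cover.
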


The proof is given in Appendix~\ref{app:proof-real}.

Theorem~\ref{thm:real} and \eqref{eq:pebasis} express UO through averaged quadratic forms on fixed eigenspace bases. Unlike the derivative-output criteria in Lemmas~\ref{lem:transfer} and~\ref{lem:determinant}, this characterization requires only measurability and essential boundedness of $C$.

\begin{rem}[Scope of the hypotheses]
The real-spectrum assumption excludes the cancellation in Remark~\ref{rem:rotation}. Boundedness of $C$ is also essential. Take $A=\left[\begin{smallmatrix}0&1\\0&0\end{smallmatrix}\right]$ and $C(t)=[1,-t]$. The restriction of $C$ to $\ker A=\operatorname{span}(e_1)$ is PE, but $C$ is unbounded and $C(t+s)e^{As}=[1,-t]$. Every Gramian has rank one. In the proof in Appendix~\ref{app:proof-real}, boundedness controls the remainder in the asymptotic state expansion uniformly in the initial time.
\end{rem}

\subsection{An averaged Kalman test}

The eigenspace characterization isolates the directions that must be excited. It can be checked through \eqref{eq:pebasis} using a basis for each eigenspace. An equivalent formulation can be obtained directly from the powers of $A$. For constant outputs, these products form the familiar Kalman observability matrix. When $C$ varies with time, the corresponding condition is a uniform positive lower bound on the average of $\sum_{k=0}^{n-1}(A^k)^\top C(t)^\top C(t)A^k$. Its entries involve only $C(t)$ and $A$, so this construction does not require the output derivatives used in Section~2.

\begin{cor}[Averaged Kalman characterization]\label{cor:apower}
Let $A\in\mathbb R^{n\times n}$ be constant with real spectrum, and let $C:[0,\infty)\to\mathbb R^{p\times n}$ be measurable and essentially bounded. Define
\[
 \mathcal O_A(t):=\operatorname{col}
 \bigl(C(t),C(t)A,\ldots,C(t)A^{n-1}\bigr).
\]
Then $(A,C(t))$ is UO if and only if $\mathcal O_A$ is PE.
\end{cor}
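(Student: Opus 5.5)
The plan is to reduce Corollary~\ref{cor:apower} to Theorem~\ref{thm:real}, which states that UO holds exactly when $C$ is PE on every eigenspace $E_\lambda$. Both directions go through that equivalence.

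\emph{Sufficiency.} Assume $\mathcal O_A$ is PE with constants $\delta,\mu$. Fix $\lambda\in\operatorname{spec}(A)$ and $v\in E_\lambda$. Then $A^kv=\lambda^kv$, so
\[
\|\mathcal O_A(s)v\|^2=\sum_{k=0}^{n-1}\lambda^{2k}\|C(s)v\|^2 .
\]
Set $c_\lambda:=\sum_{k=0}^{n-1}\lambda^{2k}$. This number satisfies $c_\lambda\geq1$, and $c_\lambda$ is finite because $A$ is fixed. Averaging the identity over $[t,t+\delta]$ gives
\[
\frac1\delta\int_t^{t+\delta}\|C(s)v\|^2ds\geq \frac{\mu}{c_\lambda}\|v\|^2 .
\]
Hence $C$ is PE on $E_\lambda$ for each $\lambda$, and Theorem~\ref{thm:real} gives UO.

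\emph{Necessity.} Assume $(A,C)$ is UO with constants $\delta,\mu$. Theorem~\ref{thm:real} then says $C$ is PE on each $E_\lambda$. We must upgrade this to PE of $\mathcal O_A$ on all of $\mathbb R^n$. A direct route avoids chaining eigenspace arguments: compare $\mathcal O_A$ with the Gramian. For $s\in[0,\delta]$, expand
\[
e^{As}=\sum_{k=0}^{n-1}\beta_k(s)A^k
\]
using the Cayley--Hamilton theorem. The coefficients $\beta_k$ are continuous on $[0,\delta]$, so $|\beta_k(s)|\leq\beta$ there for some constant $\beta$. Then
\[
\|C(t+s)e^{As}x\|\leq\beta\sum_k\|C(t+s)A^kx\|\leq\beta\sqrt n\,\|\mathcal O_A(t+s)x\| .
\]
Squaring and integrating over the window yields
\[
\mu\|x\|^2\leq x^\top W_\delta(t)x\leq \beta^2 n\,\frac1\delta\int_t^{t+\delta}\|\mathcal O_A(s)x\|^2ds .
\]
This is PE of $\mathcal O_A$ with constant $\mu/(\beta^2n)$. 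Here $\Phi(s,t)=e^{A(s-t)}$ because $A$ is constant.

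This comparison needs neither the real-spectrum assumption nor Theorem~\ref{thm:real}. So necessity holds for every constant $A$, and the spectral hypothesis is used only in sufficiency. The only care point is that the coefficient bound $\beta$ must be uniform in $s\in[0,\delta]$ and independent of $t$. This holds because the $\beta_k$ solve a fixed linear ODE, or equivalently are fixed linear combinations of the entries of $e^{As}$, which are continuous on the compact interval $[0,\delta]$.

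I expect no real obstacle beyond this. The substantive content, namely that eigenspace excitation suffices even when the generalized eigenvectors are not excited, is carried entirely by Theorem~\ref{thm:real}, which we invoke.
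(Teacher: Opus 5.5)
Your proof is correct. The direction $\mathcal O_A$ PE $\Rightarrow$ UO matches the paper's argument. On $E_\lambda$, the averaged stack form equals $\bigl(\sum_{k=0}^{n-1}\lambda^{2k}\bigr)$ times the averaged $C^\top C$ form, and Theorem~\ref{thm:real} then gives UO.

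For UO $\Rightarrow$ $\mathcal O_A$ PE you take a genuinely different route. The paper first uses the easy direction of Theorem~\ref{thm:real} to obtain common PE constants on every eigenspace. It then works only with the averaged matrices $Q_t$. It shows that $\sum_{k}(A^k)^\top QA^k$ is positive definite for every $Q$ with $0\preceq Q\preceq M^2I_n$ ($M$ the essential bound of $C$) that is uniformly positive on each $E_\lambda$. The reason is that an $A$-invariant subspace annihilated by $Q^{1/2}$ would contain a real eigenvector, and this is where real spectrum enters. Compactness of that set of $Q$ and continuity of $\lambda_{\min}$ then give a uniform bound.

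You instead compare the Gramian directly with the stack through the Cayley--Hamilton expansion $e^{As}=\sum_k\beta_k(s)A^k$. This uses no spectral hypothesis and no compactness. It also keeps the UO window $\delta$ and gives the explicit bound $\mu/(n\beta^2)$.

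Your route therefore shows that UO $\Rightarrow$ $\mathcal O_A$ PE holds for every constant $A$. The real-spectrum assumption is needed only for the converse, consistent with Remark~\ref{rem:rotation}, where $\mathcal O_A^\top\mathcal O_A=I_2$ but UO fails.

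In exchange, the paper's argument gives a direct algebraic implication from eigenspace PE to Kalman-stack PE that never involves the transition matrix. With your approach, that implication follows only by composing with the hard (sufficiency) direction of Theorem~\ref{thm:real}.
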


The proof is given in Appendix~\ref{app:proof-apower}.

When $C$ is constant, PE of $\mathcal O_A$ reduces to $\operatorname{rank}\mathcal O_A=n$, the usual Kalman condition. For a time-varying $C$, the averaged condition allows $\mathcal O_A(t)$ to lose rank at some times. The spectral hypothesis remains essential: in Remark~\ref{rem:rotation}, $\mathcal O_A(t)^\top\mathcal O_A(t)=I_2$ at every time, although the pair is not UO.

\subsection{Factored outputs}

An orthogonal change of coordinates reduces the tilt and air-velocity subsystem in \citet[Lemma~1]{TBH26P} to constant nilpotent dynamics with a time-varying output, providing an application of the real-spectrum criterion. In many such models, the output separates into a constant sensing matrix and a time-varying measurement factor, $C(t)=\Theta(t)H$. The matrix $H$ maps a state direction $v\in E_\lambda$ to the measurement-space direction $Hv$, which is then weighted by $\Theta(t)$. Observability of $(A,H)$ ensures that this map loses no nonzero direction within an eigenspace. The excitation requirement can therefore be expressed on the images $HE_\lambda=\{Hv:v\in E_\lambda\}$, without requiring a lower bound on the whole domain of $\Theta$.

\begin{cor}[Factored outputs with real spectrum]\label{cor:factorized-real}
Let $A\in\mathbb R^{n\times n}$ and $H\in\mathbb R^{r\times n}$ be constant, with real spectrum of $A$ and observable pair $(A,H)$. Let $\Theta:[0,\infty)\to\mathbb R^{p\times r}$ be measurable and essentially bounded, and set $E_\lambda:=\ker(A-\lambda I_n)$. Then $(A,\Theta(t)H)$ is UO if and only if $\Theta$ is PE on $HE_\lambda$ for every $\lambda\in\operatorname{spec}(A)$.
\end{cor}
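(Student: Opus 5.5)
The plan is to reduce the statement directly to Theorem~\ref{thm:real} applied to the output matrix $C(t)=\Theta(t)H$. This output is measurable and essentially bounded, since $H$ is constant and $\Theta$ is measurable and essentially bounded. The theorem then says that $(A,\Theta H)$ is UO if and only if, for every $\lambda\in\operatorname{spec}(A)$, there exist $\delta,\mu>0$ such that
\[
\frac1\delta\int_t^{t+\delta}\|\Theta(s)Hv\|^2ds\geq\mu\|v\|^2,\qquad t\geq0,\ v\in E_\lambda .
\]
It therefore suffices to show that, for each fixed $\lambda$, this condition is equivalent to PE of $\Theta$ on $HE_\lambda$. That is, we want a possibly different constant $\mu'>0$ such that
\[
\frac1\delta\int_t^{t+\delta}\|\Theta(s)w\|^2ds\geq\mu'\|w\|^2,\qquad w\in HE_\lambda .
\]

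The key step is to show that $H$ restricted to $E_\lambda$ is injective with bounds, namely $h_\lambda\|v\|\leq\|Hv\|\leq\|H\|\|v\|$ for all $v\in E_\lambda$, with some $h_\lambda>0$. Injectivity follows from observability of $(A,H)$. Suppose $v\in E_\lambda$ and $Hv=0$. Then $HA^kv=\lambda^kHv=0$ for all $k$, so $v$ lies in the kernel of the Kalman matrix $\operatorname{col}(H,HA,\ldots,HA^{n-1})$, which is trivial; hence $v=0$. Because $E_\lambda$ is finite-dimensional, $h_\lambda$ can be taken as the minimum of $\|Hv\|$ over the compact unit sphere of $E_\lambda$, and this minimum is positive. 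In particular, $H$ maps $E_\lambda$ bijectively onto $HE_\lambda$.

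With this in hand, both directions follow by substitution, keeping the same window length $\delta$. For sufficiency, assume PE of $\Theta$ on $HE_\lambda$ with constant $\mu'$, and take $v\in E_\lambda$. Setting $w=Hv$ gives a lower bound $\mu'\|Hv\|^2\geq\mu' h_\lambda^2\|v\|^2$. For necessity, take $w\in HE_\lambda$ and write $w=Hv$ with $v\in E_\lambda$. Then the integral is at least $\mu\|v\|^2\geq(\mu/\|H\|^2)\|w\|^2$; note that $H\neq0$ whenever $n>0$, by observability. The degenerate case $HE_\lambda=\{0\}$ cannot occur for nonzero $E_\lambda$, and the PE condition on the zero subspace is vacuous anyway. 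Since $\operatorname{spec}(A)$ is finite, the per-eigenvalue constants pose no uniformity issue.

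There is no real obstacle here. The only substantive point is the injectivity of $H$ on each eigenspace, which is exactly where observability of $(A,H)$ enters. It is worth recording that without observability the forward implication would fail: a nonzero $v\in E_\lambda\cap\ker H$ would be unobservable no matter how $\Theta$ is chosen.
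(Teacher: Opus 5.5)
Your proposal is correct and follows the paper's proof essentially step for step. You apply Theorem~\ref{thm:real} to $C=\Theta H$, use observability of $(A,H)$ to get $\|Hv\|\geq\sigma_\lambda\|v\|$ on $E_\lambda$, and then transfer the PE bounds in both directions using $\sigma_\lambda^2$ and $1/\|H\|^2$.
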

\begin{proof}
If $v\in E_\lambda$ and $Hv=0$, then $HA^kv=\lambda^kHv=0$ for every $k$. Observability of $(A,H)$ therefore implies $v=0$. Thus $H$ is injective on $E_\lambda$, and there exists $\sigma_\lambda>0$ such that $\|Hv\|\geq\sigma_\lambda\|v\|$ for all $v\in E_\lambda$.

If $\Theta$ is PE on $HE_\lambda$ with constants $\delta,\nu>0$, then
\[
 \frac1\delta\int_t^{t+\delta}\|\Theta(s)Hv\|^2ds
 \geq\nu\sigma_\lambda^2\|v\|^2.
\]
Conversely, if $\Theta H$ is PE on $E_\lambda$ with constants $\delta,\mu>0$, write $u=Hv$ for $u\in HE_\lambda$ and $v\in E_\lambda$. Since $\|u\|\leq\|H\|\|v\|$ and $\|H\|>0$,
\[
 \frac1\delta\int_t^{t+\delta}\|\Theta(s)u\|^2ds
 \geq\frac{\mu}{\|H\|^2}\|u\|^2.
\]
The two subspace conditions are equivalent, and Theorem~\ref{thm:real} applies.
\end{proof}

Figure~\ref{fig:constant-criteria} collects the equivalent tests for constant real-spectrum dynamics and their factored-output formulation.
\begin{figure}[tb]
\centering
\begin{tikzpicture}[
 criterion/.style={draw,rounded corners=1pt,align=center,
   text width=.85\columnwidth,inner sep=4pt,font=\small},
 node distance=6mm]
 \node[criterion] (uo) {$(A,C(t))$ uniformly observable};
 \node[criterion,below=of uo] (eig)
   {$C$ PE on each $E_\lambda=\ker(A-\lambda I_n)$};
 \node[criterion,below=of eig] (kalman)
   {$\operatorname{col}(C,CA,\ldots,CA^{n-1})$ PE};
 \path (uo.south) -- node {$\Updownarrow$} (eig.north);
 \path (eig.south) -- node {$\Updownarrow$} (kalman.north);
 \node[criterion,below=5mm of kalman] (factored)
   {For $C=\Theta H$ and observable $(A,H)$:\\[2pt]
    UO $\Longleftrightarrow$ $\Theta$ PE on each $HE_\lambda$};
\end{tikzpicture}
\caption{Constant-dynamics tests (Theorem~\ref{thm:real} and Corollaries~\ref{cor:apower} and~\ref{cor:factorized-real}). The matrix $A$ is constant with real spectrum; $C$ is measurable and essentially bounded. For the factored statement, $H$ is constant, $(A,H)$ is observable, and $\Theta$ is measurable and essentially bounded. Each eigenspace is tested separately.}
\label{fig:constant-criteria}
\end{figure}

Excitation on the entire measurement space implies excitation on each image $HE_\lambda$. This recovers the sufficient weighting criterion of \citet[Lemma~2.7]{HS17}, written here in the same factorization notation.

\begin{cor}[Full measurement-space excitation]\label{cor:hs17}
Let $A\in\mathbb R^{n\times n}$ and $H\in\mathbb R^{r\times n}$ be constant, with real spectrum of $A$ and observable pair $(A,H)$. Let $\Theta:[0,\infty)\to\mathbb R^{p\times r}$ be measurable and essentially bounded. If $\Theta$ is PE on $\mathbb R^r$, then $(A,\Theta(t)H)$ is UO.
\end{cor}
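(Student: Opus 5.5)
This is a direct consequence of Corollary~\ref{cor:factorized-real}, so the plan is to check its excitation hypothesis and invoke it. The hypotheses of Corollary~\ref{cor:hs17} already contain all the standing assumptions of that corollary: $A$ and $H$ are constant, $A$ has real spectrum, $(A,H)$ is observable, and $\Theta$ is measurable and essentially bounded. Hence $(A,\Theta(t)H)$ is UO as soon as $\Theta$ is PE on $HE_\lambda$ for every $\lambda\in\operatorname{spec}(A)$.

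The single step is to show that PE on $\mathbb R^r$ implies PE on every subspace of $\mathbb R^r$. By Definition~\ref{def:pe}, PE of $\Theta$ on $\mathbb R^r$ gives constants $\delta,\mu>0$ such that
\[
 \frac1\delta\int_t^{t+\delta}\|\Theta(s)u\|^2ds\geq\mu\|u\|^2,\qquad t\geq0,\quad u\in\mathbb R^r.
\]
Restricting $u$ to the subspace $HE_\lambda\subseteq\mathbb R^r$ gives the same inequality on that subspace. The constants $\delta$ and $\mu$ are unchanged, so they are uniform over $\lambda$ as well. If $HE_\lambda=\{0\}$, the condition is vacuous; this case cannot occur here, because $H$ is injective on $E_\lambda$ by the first step in the proof of Corollary~\ref{cor:factorized-real}, but it would be harmless anyway.

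Applying the sufficiency direction of Corollary~\ref{cor:factorized-real} then yields UO of $(A,\Theta(t)H)$. There is no real obstacle. The only point to state explicitly is that the PE constants on $\mathbb R^r$ serve simultaneously for every eigenspace image. One could also note that, via Theorem~\ref{thm:real}, this recovers \citet[Lemma~2.7]{HS17} under the weaker measurability and essential-boundedness assumptions on $\Theta$.
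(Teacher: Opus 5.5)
Your proof is correct and follows the paper's own argument: PE on $\mathbb R^r$ restricts to PE on each $HE_\lambda$ with the same constants, so Corollary~\ref{cor:factorized-real} gives UO. The paper adds only one remark, which you also make: taking the weight in \citet{HS17} to be $\Theta(t)^\top\Theta(t)$ gives the same Gramian, so the statement recovers that earlier lemma.
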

\begin{proof}
PE on $\mathbb R^r$ implies PE on each $HE_\lambda$, so Corollary~\ref{cor:factorized-real} applies. Taking the weighting matrix in \citet{HS17} to be $\Theta(t)^\top\Theta(t)$ gives the same observability Gramian.
\end{proof}

The sufficient condition requires a uniform lower bound on the average of $\Theta^\top\Theta$ over all of $\mathbb R^r$. The exact condition requires that bound only on each $HE_\lambda$ separately. The difference persists even for smooth outputs and can be independent of the chosen constant factorization.

\begin{exmp}[Separate eigenspaces versus full-space PE]\label{ex:plateaus}
Take $A=\operatorname{diag}(0,1)$, $H=I_2$, and
\[
 \Theta(t)=C(t)=[1,1+e^{-t}].
\]
The output matrix is smooth and bounded, with bounded derivatives of every order. Its restrictions to $E_0=\operatorname{span}(e_1)$ and $E_1=\operatorname{span}(e_2)$ satisfy $|C(t)ae_1|^2=a^2$ and $|C(t)ae_2|^2\geq a^2$. Thus $C$ is PE on both eigenspaces, and Theorem~\ref{thm:real} proves UO.

In contrast, for $w=(-1,1)^\top$ one has $C(t)w=e^{-t}$. For every fixed $\delta>0$,
\[
 \frac1\delta\int_t^{t+\delta}|C(s)w|^2ds
 \leq e^{-2t}\longrightarrow0.
\]
Hence $C$ is not PE on $\mathbb R^2$. The decay of $C(t)w$ concerns a fixed vector; the measured trajectory is instead $C(t+s)e^{As}x=x_1+(e^s+e^{-t})x_2$, in which the distinct exponential factors separate the state components.

Now consider any other constant factorization $C(t)=\widetilde\Theta(t)\widetilde H$, with $\widetilde H\in\mathbb R^{r\times2}$. The rows $C(0)$ and $C(1)$ are independent and belong to the row space of $\widetilde H$, so $\widetilde H$ has column rank two. If $\widetilde\Theta$ were PE on $\mathbb R^r$ with constants $\delta,\nu>0$, it would follow that
\[
 \frac1\delta\int_t^{t+\delta}C(s)^\top C(s)ds
 \succeq\nu\widetilde H^\top\widetilde H\succ0
\]
uniformly in $t$, contradicting the preceding estimate along $w$. Thus no constant factorization satisfies the full measurement-space excitation condition.
\end{exmp}

\begin{rem}[Nilpotent specialization]\label{rem:nilpotent}
If $A$ is nilpotent, its only eigenvalue is zero and $E_0=\ker A$. Under the measurable, essentially bounded output assumptions of Theorem~\ref{thm:real}, UO is therefore equivalent to PE of $C$ on $\ker A$. Likewise, under the hypotheses of Corollary~\ref{cor:factorized-real}, the factored-output condition becomes PE of $\Theta$ on $H\ker A$. Both conclusions follow by setting $\lambda=0$ in the preceding results.

This specialization also explains the restricted-image condition in \citet[Lemma~3]{SBT26}. For nilpotent $A$ and observable $(A,H)$, put $L_k:=\bigcap_{i\geq k+1}\ker(HA^i)$, $k=0,\ldots,n-1$. If $x\in L_k$, then $HA^j(A^{k+1}x)=0$ for every $j\geq0$, so observability gives $A^{k+1}x=0$. The reverse inclusion is immediate; hence $L_k=\ker A^{k+1}$ and $HA^kL_k\subseteq H\ker A$, with equality at $k=0$. Requiring $\Theta$ to be PE on every image is thus equivalent to the single condition on $H\ker A$.
\end{rem}

\begin{exmp}[A redundant sensor in a double integrator]\label{ex:integrator}
Consider $\dot p=v$, $\dot v=0$, with $p,v\in\mathbb R^d$ and outputs $y_1=\Theta_1(t)p$, $y_2=\Theta_2(t)v$, where both sensor matrices are measurable and essentially bounded. The state matrix is nilpotent and its kernel consists of $(p,0)$. Theorem~\ref{thm:real} with $\lambda=0$ therefore shows that the system is UO if and only if $\Theta_1$ is PE. No excitation is required of $\Theta_2$, which may vanish identically. The averaged matrix test has dimension $d$ instead of $2d$.

Applying Corollary~\ref{cor:hs17} to the factorization with $H=I_{2d}$ would require excitation in both sensor blocks. This comparison depends on the factorization: discarding the velocity output and choosing the constant position matrix also permits that criterion to establish UO. The kernel condition identifies the relevant state directions directly.
\end{exmp}

\subsection{Relation to the criterion of Wang et al.}

The eigenspace and averaged Kalman characterizations also recover the sufficient criterion of \citet[Lemma~1]{WBT22}. To state it, write the real Jordan--Chevalley decomposition $A=S+J$, where $S$ is diagonalizable, $J$ is nilpotent, and $SJ=JS$. The criterion uses selected output rows multiplied by powers of $J$. The following corollary shows how this known result follows from Theorem~\ref{thm:real} and Corollary~\ref{cor:apower}.

\begin{cor}[Nilpotent-component criterion; {\citealp[Lemma~1]{WBT22}}]\label{cor:wbt22}
Let $A\in\mathbb R^{n\times n}$ be constant with real spectrum, and let $C:[0,\infty)\to\mathbb R^{p\times n}$ be measurable and essentially bounded. Write the real Jordan--Chevalley decomposition $A=S+J$, with $S$ diagonalizable, $J$ nilpotent, and $SJ=JS$, and let $q\geq1$ be an integer such that $J^q=0$. If a matrix formed from a fixed selection of rows of $\operatorname{col}(C,CJ,\ldots,CJ^{q-1})$ is PE, then $(A,C(t))$ is UO.
\end{cor}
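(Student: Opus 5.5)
Using Theorem~\ref{thm:real}, it suffices to show that $C$ is PE on each eigenspace $E_\lambda=\ker(A-\lambda I_n)$. The link between $J$ and the eigenspaces is the following. Since $S$ is diagonalizable with real spectrum and commutes with $J$, the generalized eigenspace $G_\lambda=\ker(A-\lambda I_n)^n$ is exactly the $\lambda$-eigenspace of $S$, and $J=A-S$ acts on $G_\lambda$ as $A-\lambda I_n$. Hence $E_\lambda=G_\lambda\cap\ker J$. For $v\in E_\lambda$ we have $Jv=0$, so $CJ^kv=0$ for $k\geq1$. Therefore
\[
\|\operatorname{col}(C,CJ,\ldots,CJ^{q-1})(s)v\|^2=\|C(s)v\|^2 .
\]
If $M$ is a fixed row selection of this stack, then $\|M(s)v\|^2\leq\|C(s)v\|^2$ for $v\in E_\lambda$. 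The rows taken from the blocks $CJ^k$ with $k\geq1$ vanish on $E_\lambda$, and the rows taken from $C$ give a subset of the entries of $C(s)v$.

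Suppose $M$ is PE with constants $\delta,\mu$. Then for every $v\in E_\lambda$,
\[
\frac1\delta\int_t^{t+\delta}\|C(s)v\|^2ds\geq\frac1\delta\int_t^{t+\delta}\|M(s)v\|^2ds\geq\mu\|v\|^2 .
\]
So $C$ is PE on every $E_\lambda$, with the same constants for every $\lambda$. Theorem~\ref{thm:real} then gives UO. Measurability and essential boundedness of $M$ follow from those of $C$, because $J$ is constant.

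An alternative route goes through Corollary~\ref{cor:apower}. One expands $CA^k=C(S+J)^k$, but this only controls $\mathcal O_A$ up to mixing by polynomials in $S$. It would require a Lagrange-interpolation argument on the spectrum of $S$ to show that each $CJ^j$ restricted to $G_\lambda$ is a linear combination of the rows of $CA^k$. The eigenspace route avoids this, so I would use it and mention Corollary~\ref{cor:apower} only as a remark.

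The only step that needs care is the identification $E_\lambda=G_\lambda\cap\ker J$. This requires the Jordan--Chevalley facts that $S$ and $J$ are polynomials in $A$ and that $S|_{G_\lambda}=\lambda I$. Once this is established, the comparison of norms is immediate. I expect no analytic obstacle, because all the uniformity is inherited from Theorem~\ref{thm:real}.
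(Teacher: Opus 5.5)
Your proof is correct, and it takes a more direct route than the paper. You use only the inclusion $E_\lambda\subseteq\ker J$ and one pointwise fact. On $\ker J$, every row of the stack coming from a block $CJ^k$ with $k\geq1$ vanishes, so $\|M(s)v\|\le\|C(s)v\|$ there. Hence PE of the selection $M$ transfers immediately to PE of $C$ on each $E_\lambda$, and Theorem~\ref{thm:real} finishes.

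The paper instead proves $\ker J=\bigoplus_\lambda E_\lambda$ and applies Theorem~\ref{thm:real} and Corollary~\ref{cor:apower} to the auxiliary nilpotent pair $(J,C(t))$. This gives the equivalence between PE of the full stack $\operatorname{col}(C,CJ,\ldots,CJ^{q-1})$ and PE of $C$ on $\ker J$. The paper then argues that a PE selected substack implies PE of the full stack.

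For the corollary itself, only the easy direction of that equivalence is needed. That direction is exactly your norm comparison, so your argument avoids Corollary~\ref{cor:apower} entirely. What the paper's detour buys is the converse: the full nilpotent-power condition is characterized as excitation on the sum $\bigoplus_\lambda E_\lambda$. The subsequent discussion and Example~\ref{ex:plateaus} rely on this characterization to show that the criterion is strictly stronger than the separate-eigenspace condition.

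On your side remark: the mixing by polynomials in $S$ that you anticipate does not arise in the paper's use of Corollary~\ref{cor:apower}. The paper applies that corollary to $(J,C(t))$, whose Kalman stack is literally $\operatorname{col}(C,CJ,\ldots,CJ^{n-1})$, rather than to $(A,C(t))$.
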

\begin{proof}
Use the generalized eigenspace decomposition $\mathbb R^n=\bigoplus_\lambda\mathcal G_\lambda$, where $\mathcal G_\lambda:=\ker((A-\lambda I_n)^n)$, and write $x=\sum_\lambda x_\lambda$ with $x_\lambda\in\mathcal G_\lambda$. On $\mathcal G_\lambda$, the matrices $S$ and $J$ act as $\lambda I$ and $A-\lambda I$, respectively. Since these subspaces form a direct sum, $Jx=0$ if and only if $(A-\lambda I_n)x_\lambda=0$ for every $\lambda$. Hence
\[
 \ker J=\bigoplus_{\lambda\in\operatorname{spec}(A)}E_\lambda.
\]

Apply Theorem~\ref{thm:real} and Corollary~\ref{cor:apower} to the auxiliary pair $(J,C(t))$. Because $J$ is nilpotent, the theorem identifies UO of this pair with PE of $C$ on $\ker J$, while the corollary identifies it with PE of $\operatorname{col}(C,CJ,\ldots,CJ^{n-1})$. The stacks through powers $q-1$ and $n-1$ differ only by zero blocks, since $J^q=0$ and every nilpotent $n\times n$ matrix satisfies $J^n=0$. Thus PE of the full stack in the statement is equivalent to PE of $C$ on $\ker J$.

The omitted rows contribute a positive semidefinite term to the averaged quadratic form, so PE of the selected stack implies PE of the full stack. It therefore gives PE of $C$ on $\ker J$, and hence on each $E_\lambda$ separately. Theorem~\ref{thm:real}, now applied to $(A,C(t))$, proves the claim.
\end{proof}

Fixed positive weights on the stack rows affect only the lower constants. A selected PE substack is sufficient but need not be equivalent to the full stack. Testing $C$ on $\ker J$ involves a matrix of dimension $\dim\ker J$ instead of $n$, although the power stack can be easier to recognize in a particular model.

PE on each eigenspace does not imply PE on their sum, because measured vectors from distinct eigenspaces may cancel. The distinct real exponential factors nevertheless allow the dynamics to separate their contributions, as Theorem~\ref{thm:real} establishes. In Example~\ref{ex:plateaus}, $J=0$ and $\ker J=\mathbb R^2$, so both the kernel and nilpotent-power conditions fail, whereas the separate-eigenspace condition holds.

\section{Constant-output cascades}

The preceding criteria concern constant state matrices. A cascade with a time-varying coupling generally lies outside that class, even when the dynamics of its two subsystems are constant. Consider
\begin{equation}
 \begin{aligned}
 \dot\xi&=A_o\xi+G(t)z, & \dot z&=Bz,\\
 q&=C_o\xi, & y&=\operatorname{col}(q,Dz).
 \end{aligned}
 \label{eq:general-cascade}
\end{equation}
Here $\xi\in\mathbb R^m$, $z\in\mathbb R^n$, and $q\in\mathbb R^p$, with $m,n,p\geq1$. The matrices $A_o\in\mathbb R^{m\times m}$, $C_o\in\mathbb R^{p\times m}$, $B\in\mathbb R^{n\times n}$, and $D\in\mathbb R^{d\times n}$ are constant, whereas $G:[0,\infty)\to\mathbb R^{m\times n}$ is time varying. The case $d=0$ corresponds to the absence of direct measurements of $z$. We assume that $(A_o,C_o)$ is observable. Thus $q$ determines $\xi$ when $z=0$; we seek a condition on $G(t)$ and $D$ that characterizes UO of the full state $(\xi,z)$.

\subsection{Reduction to the hidden state}

Let $\nu$ be the observability index of $(A_o,C_o)$, namely the smallest positive integer for which $\mathcal O_-$ below has full column rank $m$:
\begin{equation}
 \begin{aligned}
 \mathcal O_-&:=\operatorname{col}(C_o,C_oA_o,\ldots,C_oA_o^{\nu-1}),\\
 \mathcal O_+&:=\operatorname{col}(C_o,C_oA_o,\ldots,C_oA_o^\nu).
 \end{aligned}
 \label{eq:cascade-stacks}
\end{equation}
Observability gives $1\leq\nu\leq m$. To see how the coupling enters the output derivatives, first differentiate $q$:
\[
 \dot q(t)=C_oA_o\xi(t)+C_oG(t)z(t).
\]
More generally, if $q^{(k)}(t)=C_oA_o^k\xi(t)+R_k(t)z(t)$, then differentiation gives
\[
 \begin{aligned}
 q^{(k+1)}(t)&=C_oA_o^{k+1}\xi(t)\\
 &\quad+\bigl(C_oA_o^kG(t)+\dot R_k(t)+R_k(t)B\bigr)z(t).
 \end{aligned}
\]
This calculation motivates the recursion
\begin{equation}
 \begin{aligned}
 R_0(t)&:=0,\\
 R_{k+1}(t)&:=\dot R_k(t)+R_k(t)B+C_oA_o^kG(t),\\
 &\hspace{38mm}0\leq k<\nu,
 \end{aligned}
 \label{eq:general-recursion}
\end{equation}
where each $R_k(t)\in\mathbb R^{p\times n}$. Whenever the indicated derivatives exist, induction therefore yields
\begin{equation}
 q^{(k)}(t)=C_oA_o^k\xi(t)+R_k(t)z(t),
 \qquad 0\leq k\leq\nu.
 \label{eq:cascade-derivatives}
\end{equation}

To cancel the terms involving $\xi(t)$, choose a constant matrix $N\in\mathbb R^{\ell\times p(\nu+1)}$, where $\ell:=p(\nu+1)-m$, whose rows form a basis of the entire left nullspace of $\mathcal O_+$. Thus
\begin{equation}
 N\mathcal O_+=0,\qquad
 \operatorname{rank}N=\ell,\qquad
 \ker N=\operatorname{im}\mathcal O_+.
 \label{eq:cascade-nullspace}
\end{equation}
Define
\begin{equation}
 \begin{aligned}
 \mathcal R_-(t)&:=\operatorname{col}(R_0(t),\ldots,R_{\nu-1}(t)),\\
 \mathcal R_+(t)&:=\operatorname{col}(R_0(t),\ldots,R_\nu(t)),\\
 \Gamma(t)&:=N\mathcal R_+(t),\qquad
 F(t):=\operatorname{col}(D,\Gamma(t)).
 \end{aligned}
 \label{eq:cascade-reduced-output}
\end{equation}
Writing $Q_+(t):=\operatorname{col}(q(t),\dot q(t),\ldots,q^{(\nu)}(t))$, equation~\eqref{eq:cascade-derivatives} gives
\begin{equation}
 Q_+(t)=\mathcal O_+\xi(t)+\mathcal R_+(t)z(t),
 \qquad NQ_+(t)=\Gamma(t)z(t).
 \label{eq:cascade-elimination}
\end{equation}
Consequently, $F(t)z(t)$ consists of the direct output $Dz(t)$ and combinations of output derivatives in which $\xi(t)$ cancels. The reduced system is
\[
 \dot z=Bz,\qquad \widetilde y=F(t)z.
\]
For this algebraic construction to preserve UO, vanishing integrals of $\|q\|^2$ on a fixed window must force the corresponding derivatives to vanish as well. The uniform regularity assumptions below ensure this implication.

\begin{figure}[tb]
\centering
\begin{tikzpicture}[font=\small,>=stealth,
 dyn/.style={draw,rounded corners=1pt,align=center,minimum height=9mm,inner sep=4pt},
 map/.style={draw,align=center,minimum width=10mm,minimum height=8mm,inner sep=3pt}]
 \node[anchor=west,font=\small\bfseries] at (-.45,.95) {(a) Original cascade};
 \node[dyn] (z) at (.2,0) {$\dot z=Bz$};
 \node[map] (g) at (2.5,0) {$G(t)$};
 \node[dyn] (xi) at (5.1,0) {$\dot\xi=A_o\xi+G(t)z$};
 \draw[->] (z) -- node[above] {$z$} (g);
 \draw[->] (g) -- (xi);
 \node[map] (d) at (.2,-1.3) {$D$};
 \node[map] (c) at (5.1,-1.3) {$C_o$};
 \draw[->] (z) -- (d);
 \draw[->] (xi) -- node[right] {$\xi$} (c);
 \draw[->] (d) -- (.2,-2.05) node[below] {$Dz$};
 \draw[->] (c) -- (5.1,-2.05) node[below] {$q$};
 \node[align=center] at (2.6,-2.8)
   {$\Updownarrow$\quad Equivalent UO (Theorem~\ref{thm:cascade})};
 \node[anchor=west,font=\small\bfseries] at (-.45,-3.4) {(b) Reduced system};
 \node[dyn] (zr) at (.2,-4.25) {$\dot z=Bz$};
 \node[map] (f) at (2.5,-4.25) {$F(t)$};
 \node[inner sep=0pt] (yr) at (5.1,-4.25) {$\widetilde y=F(t)z$};
\draw[->] (zr) -- node[above] {$z$} (f);
 \draw[->,shorten >=3pt] (f) -- (yr);
 \node at (2.6,-5.1) {$F(t)=\operatorname{col}(D,\Gamma(t))$};
\end{tikzpicture}
\caption{The original cascade and the reduced hidden-state system. The derivative relation $NQ_+(t)=\Gamma(t)z(t)$ eliminates $\xi$. Under the regularity assumptions of Theorem~\ref{thm:cascade}, the two systems are UO simultaneously. No spectral restriction on $B$ is needed for this equivalence.}
\label{fig:cascade-criteria}
\end{figure}

\begin{thm}[Multiple-output cascade reduction]\label{thm:cascade}
Consider~\eqref{eq:general-cascade} with constant matrices $A_o,C_o,B,D$ and observable pair $(A_o,C_o)$. Let $\nu$, $R_k(t)$, $N$, and $F(t)$ be defined by \eqref{eq:cascade-stacks}, \eqref{eq:general-recursion}, \eqref{eq:cascade-nullspace}, and~\eqref{eq:cascade-reduced-output}. Assume that $G,R_0,\ldots,R_\nu$ are continuous and uniformly bounded on $[0,\infty)$, that all derivatives in \eqref{eq:general-recursion} exist, and that $R_\nu$ is uniformly continuous on $[0,\infty)$. Then the full system is UO if and only if the reduced pair $(B,F(t))$ is UO.
\end{thm}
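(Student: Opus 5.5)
The plan is to prove both implications by contradiction with normalized sequences, after first rewriting the cascade in coordinates where the reduced output appears as a forcing term. Fix a left inverse $P$ of $\mathcal O_-$. Since $C_oA_o^\nu$ lies in the row space of $\mathcal O_-$, write $C_oA_o^\nu=L\mathcal O_-$. Along solutions, \eqref{eq:cascade-derivatives} with $Q_-:=\operatorname{col}(q,\ldots,q^{(\nu-1)})$ gives $Q_-=\mathcal O_-\xi+\mathcal R_-z$. Introduce the time-varying change of variables
\[
 \eta:=\xi+P\mathcal R_-(t)z .
\]
Because $\mathcal R_-$ is bounded, this is a bounded transformation with bounded inverse. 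UO is preserved under such a pointwise change of the initial state, since the Gramian is only transformed by congruence at the initial time. Differentiating $\eta=PQ_-$ and using $q^{(\nu)}-LQ_-=(R_\nu-L\mathcal R_-)z$ gives
\[
 \dot\eta=A_o\eta+K\,\Gamma(t)z
\]
for a constant $K$. Here I use that $\operatorname{col}(-L,I)$, together with the rows annihilating $\mathcal O_-$ (which produce $(I-\mathcal O_-P)\mathcal R_-z$), lies in the row space of $N$. Similarly, $q=C_o\eta-C_oP\mathcal R_-z$, and the second term is again a fixed linear image of $\Gamma z$, because $C_o(I-P\,\cdot)$ acts through left-null directions of $\mathcal O_+$. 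I would verify this bookkeeping carefully; it is purely algebraic given \eqref{eq:cascade-nullspace}.

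\emph{Full UO $\Rightarrow$ reduced UO.} Suppose $(B,F)$ is not UO. Then for a fixed $\delta$ there are $t_k$ and unit $z_0^k$ with $\int_{t_k}^{t_k+\delta}\|F(s)e^{B(s-t_k)}z_0^k\|^2ds\to0$. Take the initial condition $\eta(t_k)=0$, $z(t_k)=z_0^k$. By variation of constants and Cauchy--Schwarz, with $K$ and $A_o$ constant, $\eta$ stays $\mathcal L^2$-small on the window. Hence $q=C_o\eta+(\text{linear in }\Gamma z)$ and $Dz$ are small, so the full output tends to zero. The corresponding full initial state $(\xi,z)=(-P\mathcal R_-(t_k)z_0^k,z_0^k)$ has norm at least $1$, which contradicts full UO. This direction uses only boundedness.

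\emph{Reduced UO $\Rightarrow$ full UO.} Suppose full UO fails. Then there are unit initial states at times $t_k$ with $\int\|q\|^2+\|Dz\|^2\to0$ on windows $[t_k,t_k+2\delta]$. The key step, and the main obstacle, is to show that the derivatives $q^{(j)}$, $j\leq\nu$, are also $\mathcal L^2$-small on a subwindow, say $[t_k,t_k+\delta]$. Boundedness of $G,R_0,\ldots,R_\nu$ gives uniform bounds on $q,\ldots,q^{(\nu)}$ along unit-norm solutions. Moreover, $q^{(\nu)}=C_oA_o^\nu\xi+R_\nu z$ has a modulus of continuity uniform in $k$, since $R_\nu$ is uniformly continuous and $\xi,z$ are uniformly Lipschitz. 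I would first try a compactness argument. Translate the windows to $[0,2\delta]$. The family $\{q^{(\nu)}\}$ is equicontinuous and bounded, and the lower derivatives are uniformly Lipschitz. Arzel\`a--Ascoli then gives uniform convergence of $q^{(j)}$ along a subsequence. The limit of $q$ is $0$ in $\mathcal L^2$ and continuous, hence identically zero, and so are the limits of its derivatives, by passing to the limit in integrated form. As a result, $\sup|q^{(j)}|\to0$ on the window. Alternatively, a Landau--Kolmogorov interpolation inequality with the modulus of continuity would give a quantitative bound.

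Granting this, $NQ_+=\Gamma z\to0$ and $Dz\to0$ on the subwindow, so reduced UO forces $z_0^k\to0$. Then $\eta$ satisfies $\dot\eta=A_o\eta+K\Gamma z$ with vanishing forcing, and the observation $C_o\eta=q+C_oP\mathcal R_-z\to0$. Since $(A_o,C_o)$ is a constant observable pair, its Gramian is uniformly positive, so $\eta(t_k)\to0$. Hence $\xi(t_k)\to0$, contradicting unit norm. No spectral assumption on $B$ enters either direction.
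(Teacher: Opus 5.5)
Your proposal is correct and follows essentially the same route as the paper: the same change of variables $\eta=\xi+P\mathcal R_-(t)z$ together with the left-nullspace factorization through $N$, the same initial state $\eta(t)=0$ with a variation-of-constants bound for the necessity direction, and the same Arzel\`a--Ascoli argument based on uniform continuity of $R_\nu$ for sufficiency. The differences are cosmetic: you argue necessity by contradiction rather than obtaining the paper's explicit bound $\mu/c_\delta$, and in the sufficiency direction you conclude $z_0^k\to0$ directly from reduced UO instead of first extracting a limit initial vector $v$.
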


The proof is given in Appendix~\ref{app:proof-cascade}.

A convenient sufficient condition for the theorem's regularity assumptions is $G\in\mathcal C_b^\nu$. Indeed, each $R_k(t)$ is a finite sum of constant matrix multiples of derivatives of $G(t)$ through order $k-1$, and then $\dot R_\nu$ is bounded. The stated assumptions also allow weaker regularity, provided that $R_\nu$ is uniformly continuous. In particular, the compactness step in Appendix~\ref{app:proof-cascade} controls the highest derivative on the whole closed observation window; boundedness alone would not suffice, as illustrated in Remark~\ref{rem:chirp}.

The reduced UO condition does not depend on the chosen basis of the left nullspace. Any other basis has the form $\widetilde N=UN$ with $U$ constant and invertible, so $\widetilde\Gamma(t)=U\Gamma(t)$ and the squared norms of $\operatorname{col}(Dz,\widetilde\Gamma(t)z)$ and $F(t)z$ bound each other by fixed positive factors. All matrices in the criterion are formed from the known coefficients and their derivatives; its evaluation does not require differentiating measured data.

Linear combinations of output derivatives that eliminate a state also occur in continuous-time parity relations \citep[Section~III, equations~(17)--(19)]{Varga09}. Theorem~\ref{thm:cascade} proves equivalence of the full and reduced UO conditions for~\eqref{eq:general-cascade}. The implication from small measured outputs to small derivative outputs follows the strategy of \citet{MES17} and \citet{HHS20}, with the compactness argument in Appendix~\ref{app:proof-cascade} accommodating uniform continuity of $R_\nu$. Neither implication imposes a spectral or stability condition on $A_o$ or $B$.

\begin{rem}[Increasingly rapid oscillations]\label{rem:chirp}
The uniform regularity assumption cannot be replaced by boundedness and smoothness alone. Consider $\dot h=v$, $\dot v=b(t)z$, $\dot z=0$, and $y=h$, with scalar $z$ and $b(t):=\sin(t^2)$. Here $\Gamma(t)=b(t)$, so the reduced pair $(0,b(t))$ is UO precisely when $b$ is PE. To verify PE, write
\[
 \int_t^{t+1}\sin^2(s^2)ds
 =\frac12-\frac12\int_t^{t+1}\cos(2s^2)ds.
\]
Integration by parts gives, for $t>0$,
\[
 \int_t^{t+1}\cos(2s^2)ds
 =\left[\frac{\sin(2s^2)}{4s}\right]_t^{t+1}
 +\int_t^{t+1}\frac{\sin(2s^2)}{4s^2}ds.
\]
Its absolute value is at most $1/(2t)+1/(4t^2)$, so the integral of $\sin^2(s^2)$ is at least $1/4$ for all sufficiently large $t$. On the remaining compact interval of initial times, that integral is continuous and strictly positive. Thus $b$ is PE with a common unit window.

Nevertheless, fix any $\delta>0$ and choose the unit initial state $(h(t),v(t),z(t))=(0,0,1)$. For $0\leq u\leq\delta$, integration by parts gives
\[
 \begin{aligned}
 v(t+u)
 &=\int_0^u\sin((t+s)^2)ds\\
 &=-\left[\frac{\cos((t+s)^2)}{2(t+s)}\right]_{s=0}^{s=u}\\
 &\quad-\int_0^u\frac{\cos((t+s)^2)}{2(t+s)^2}ds.
 \end{aligned}
\]
Consequently, $|v(t+u)|\leq1/t+u/(2t^2)$ and $|h(t+u)|\leq u/t+u^2/(4t^2)$. Therefore
\[
 \int_0^\delta|h(t+s)|^2ds
 \leq\delta\left(\frac{\delta}{t}+\frac{\delta^2}{4t^2}\right)^2
 \longrightarrow0.
\]
No fixed observation window admits a uniform positive Gramian bound. Here $b$ is smooth and bounded but not uniformly continuous: the times $s_k=\sqrt{2\pi k+\pi/2}$ and $u_k=\sqrt{2\pi k+3\pi/2}$ satisfy $u_k-s_k\to0$, whereas $b(s_k)-b(u_k)=2$. The coupling can therefore be PE while its effect after integration becomes arbitrarily small.
\end{rem}

\subsection{Excitation on unmeasured eigenspaces}

The reduction separates the role of the measured subsystem from the observability of $z$. When $B$ has real spectrum, Theorem~\ref{thm:real} characterizes the reduced pair through $F(t)=\operatorname{col}(D,\Gamma(t))$ on each eigenspace of $B$. On $E_\lambda:=\ker(B-\lambda I_n)$, the test is a lower bound for $\|Dv\|^2+\delta^{-1}\int_t^{t+\delta}\|\Gamma(s)v\|^2ds$. The term $\|Dv\|^2$ vanishes precisely on $E_\lambda\cap\ker D$. The next corollary proves that excitation need only be checked on this intersection.

\begin{cor}[Excitation on unmeasured eigenspaces]\label{cor:hidden}
Assume the hypotheses and definitions of Theorem~\ref{thm:cascade}, and suppose $\operatorname{spec}(B)\subset\mathbb R$. Then the full cascade~\eqref{eq:general-cascade} is UO if and only if $\Gamma$ is PE on each subspace
\[
 U_\lambda:=\ker(B-\lambda I_n)\cap\ker D,
 \qquad \lambda\in\operatorname{spec}(B),
\]
separately.
\end{cor}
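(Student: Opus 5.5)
By Theorem~\ref{thm:cascade}, UO of the full cascade is equivalent to UO of the reduced pair $(B,F(t))$ with $F=\operatorname{col}(D,\Gamma)$. The hypotheses make $\Gamma=N\mathcal R_+$ continuous and uniformly bounded, so $F$ is measurable and essentially bounded. Since $\operatorname{spec}(B)\subset\mathbb R$, Theorem~\ref{thm:real} then says that $(B,F)$ is UO if and only if $F$ is PE on every eigenspace $E_\lambda=\ker(B-\lambda I_n)$. It therefore remains to show, for each fixed $\lambda$, that $F$ is PE on $E_\lambda$ if and only if $\Gamma$ is PE on $U_\lambda=E_\lambda\cap\ker D$.

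For a fixed vector $v$ the averaged quadratic form splits as
\[
 \frac1\delta\int_t^{t+\delta}\|F(s)v\|^2ds=\|Dv\|^2+\frac1\delta\int_t^{t+\delta}\|\Gamma(s)v\|^2ds .
\]
The necessity direction is immediate. If $F$ is PE on $E_\lambda$ with constants $(\delta,\mu)$ and $v\in U_\lambda$, then $Dv=0$, and the identity yields PE of $\Gamma$ on $U_\lambda$ with the same constants.

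For sufficiency, assume $\Gamma$ is PE on $U_\lambda$ with constants $(\delta,\mu)$. Let $W$ be the orthogonal complement of $U_\lambda$ inside $E_\lambda$. Then $D$ is injective on $W$, so $\|Dw\|\geq\sigma\|w\|$ for some $\sigma>0$ and all $w\in W$. Write $v=u+w$ with $u\in U_\lambda$ and $w\in W$. Using $Dv=Dw$ and $\|\Gamma(s)(u+w)\|^2\geq\tfrac12\|\Gamma(s)u\|^2-\|\Gamma(s)w\|^2$, we get
\[
 \frac1\delta\int_t^{t+\delta}\|F(s)v\|^2ds\geq\sigma^2\|w\|^2+\frac{\mu}{2}\|u\|^2-\gamma^2\|w\|^2,
\]
where $\gamma:=\sup_s\|\Gamma(s)\|$. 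The last term can cancel the first, so this crude bound is the main obstacle.

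The fix is to weight the two pieces of the splitting differently. Because $\|Dv\|^2$ does not depend on $t$, for any $\epsilon\in(0,1]$ we have
\[
 \frac1\delta\int_t^{t+\delta}\|F(s)v\|^2ds\geq\|Dw\|^2+\epsilon\,\frac1\delta\int_t^{t+\delta}\|\Gamma(s)v\|^2ds
 \geq\Bigl(\sigma^2-\epsilon\gamma^2\Bigr)\|w\|^2+\frac{\epsilon\mu}{2}\|u\|^2 .
\]
Choosing $\epsilon=\min\{1,\sigma^2/(2\gamma^2)\}$ makes both coefficients positive. Since $\|v\|^2=\|u\|^2+\|w\|^2$, this gives PE of $F$ on $E_\lambda$ with window $\delta$ and constant $\min\{\sigma^2/2,\epsilon\mu/2\}$.

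Two degenerate cases need a remark. If $U_\lambda=\{0\}$, the PE condition on $U_\lambda$ is vacuous, and PE of $F$ on $E_\lambda$ follows from injectivity of $D$ alone. If $d=0$, then $U_\lambda=E_\lambda$ and the claim is exactly Theorem~\ref{thm:real}. Finally, Theorem~\ref{thm:real} allows its constants to depend on $\lambda$: given PE on each eigenspace separately, one common window is obtained by taking the maximum of the finitely many windows, since PE on a window $\delta$ persists on any longer window (covering it by shifted copies of length $\delta$).
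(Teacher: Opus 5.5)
Your proof is correct, but the sufficiency step takes a different route from the paper's.

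\emph{The paper's route.} It argues by contradiction. If $D^\top D+Q_t$ (with $Q_t$ the windowed average of $\Gamma^\top\Gamma$) had no uniform lower bound on $E_\lambda$, there would be unit vectors $w_j\in E_\lambda$ and times $t_j$ with a subsequential limit $w\in E_\lambda\cap\ker D=U_\lambda$. Boundedness of $Q_t$ then gives $w^\top Q_{t_j}w\to0$, contradicting PE on $U_\lambda$. This is short and needs only compactness of the unit sphere and uniform boundedness of $Q_t$. It gives no explicit constant.

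\emph{Your route.} You decompose $v=u+w$ orthogonally, with $u\in U_\lambda$ and $w\in W=U_\lambda^\perp\cap E_\lambda$. You use injectivity of $D$ on $W$, and you neutralize the cross term by weighting the $\Gamma$-integral with $\epsilon=\min\{1,\sigma^2/(2\gamma^2)\}$. This yields an explicit lower bound $\min\{\sigma^2/2,\epsilon\mu/2\}$ on the \emph{same} window $\delta$. The bound depends only on the minimal gain $\sigma$ of $D$ on $W$, the bound $\gamma=\sup_s\|\Gamma(s)\|$, and the PE constant $\mu$. Both arguments use boundedness of $\Gamma$ in the same essential way.

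\emph{Two bookkeeping gaps.} Neither affects the argument. First, the case $W=\{0\}$ with $d>0$ (that is, $E_\lambda\subseteq\ker D$) is not literally your $d=0$ remark. There any $\sigma>0$ works vacuously. Second, $\gamma=0$ makes your formula for $\epsilon$ undefined. Under the PE hypothesis it forces $U_\lambda=\{0\}$, so it falls into your first degenerate case.
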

\begin{proof}
By Theorems~\ref{thm:cascade} and~\ref{thm:real}, UO of the full system is equivalent to PE of $F(t)=\operatorname{col}(D,\Gamma(t))$ on each eigenspace $E_\lambda=\ker(B-\lambda I_n)$. Since $D$ vanishes on $U_\lambda$, this proves necessity.

Conversely, suppose $\Gamma$ is PE on each $U_\lambda$. Choose common PE constants $\delta,\beta>0$, enlarging the finitely many windows and decreasing the bounds if necessary, and set
\[
 Q_t:=\frac1\delta\int_t^{t+\delta}\Gamma(s)^\top\Gamma(s)ds.
\]
The matrices $Q_t$ are positive semidefinite and uniformly bounded. If $D^\top D+Q_t$ had no uniform positive lower bound on some $E_\lambda$, there would be times $t_j$ and unit vectors $w_j\in E_\lambda$ for which $\|Dw_j\|^2+w_j^\top Q_{t_j}w_j\to0$. Passing to a subsequence gives $w_j\to w\in E_\lambda$ with $\|w\|=1$ and $Dw=0$. Uniform boundedness of $Q_t$ also gives $w^\top Q_{t_j}w\to0$. But $w\in U_\lambda$, so the PE hypothesis requires $w^\top Q_{t_j}w\geq\beta$, a contradiction. Taking the minimum of the lower bounds over the finitely many eigenspaces proves PE of $F$ on each of them.
\end{proof}

A zero subspace $U_\lambda$ imposes no condition. In particular, if every $U_\lambda$ is zero, the direct measurements suffice for UO, regardless of the coupling within the stated regularity class.

\subsection{Two illustrations of the reduction}

\begin{exmp}[Two measured channels]\label{ex:multiple-channels}
This example has two measured channels of different derivative orders and an observable measured subsystem that admits no observable scalar output. Let
\[
 A_o=\begin{bmatrix}0&1&0\\0&0&0\\0&0&0\end{bmatrix},
 \qquad C_o=\begin{bmatrix}1&0&0\\0&0&1\end{bmatrix},
\]
\[
 G(t)=\begin{bmatrix}0&0\\\sin t&0\\0&\cos t\end{bmatrix},
\]
with $B=0_{2\times2}$ and no direct output. Thus $q_1=\xi_1$ and $q_2=\xi_3$. The pair $(A_o,C_o)$ has $\nu=2$, and
\[
 R_1(t)=\begin{bmatrix}0&0\\0&\cos t\end{bmatrix},
 \qquad R_2(t)=\begin{bmatrix}\sin t&0\\0&-\sin t\end{bmatrix}.
\]
The first three rows of $\mathcal O_+$ are linearly independent, and its last three rows vanish. Selecting those last three rows gives a basis $N$ and hence
\[
 \Gamma(t)=\begin{bmatrix}0&\cos t\\\sin t&0\\0&-\sin t\end{bmatrix},
\]
\[
 \frac1\pi\int_t^{t+\pi}\Gamma(s)^\top\Gamma(s)ds
 =\begin{bmatrix}1/2&0\\0&1\end{bmatrix}.
\]
The cascade is UO by Corollary~\ref{cor:hidden}. Both measured channels are needed for the hypothesis on $(A_o,C_o)$: since $\dim\ker A_o=2$, every scalar output row vanishes on some nonzero vector in $\ker A_o$.
\end{exmp}

\begin{exmp}[A second-order cascade with direct measurements]\label{ex:direct-cascade}
Consider
\begin{equation}
 \dot h=v,\quad \dot v=b(t)^\top z,\quad \dot z=0,
 \label{eq:second-order}
\end{equation}
with output $y=\operatorname{col}(h,Dz)$, where $D$ is constant and $b$ is bounded and uniformly continuous. Here $m=2$, $B=0$, $R_1(t)=0$, and $R_2(t)=b(t)^\top$. Corollary~\ref{cor:hidden} therefore gives UO if and only if $b^\top$ is PE on $\ker D$.

For the ideal LTV model in \citet[equations~(37)--(38)]{TBH26},
\[
 b(t)^\top=-e_3^\top[a_I(t)]_\times,\qquad D=-[m_I]_\times.
\]
The notation $[u]_\times v=u\times v$ denotes the cross-product matrix, $a_I$ is the acceleration in inertial coordinates, $m_I\ne0$ is a constant reference magnetic vector, and $e_3$ is the vertical direction. Assume that $a_I$ is bounded and uniformly continuous; bounded $a_I,\dot a_I$ is a sufficient condition. Since $\ker D=\operatorname{span}(m_I)$, writing $\widehat m_I:=m_I/\|m_I\|$ gives the exact condition: there exist $\delta,\beta>0$ such that
\begin{equation}
 \frac1\delta\int_t^{t+\delta}
 \bigl|e_3^\top[a_I(s)]_\times\widehat m_I\bigr|^2ds\geq\beta
 \label{eq:scalar-motion}
\end{equation}
for every $t\geq0$. The scalar in the integrand is $a_I^\top(\widehat m_I\times e_3)$. It vanishes if $m_I$ is parallel to $e_3$.

To compare with \citet[Lemma~2]{TBH26}, let $J_h:=[e_1\ e_2]$ and $a_\perp(t):=e_3^\top[a_I(t)]_\times J_h$. Their sufficient condition requires PE of $a_\perp$ on $\mathbb R^2$. Writing $w:=J_h^\top\widehat m_I$, one has $e_3^\top[a_I]_\times\widehat m_I=a_\perp w$, so that condition implies \eqref{eq:scalar-motion} whenever $m_I$ is not parallel to $e_3$. The converse fails: for $m_I=e_1$ and $a_I(t)=\sin(t)e_2$, the left side of \eqref{eq:scalar-motion} equals $1/2$ on every window of length $\pi$, whereas every averaged matrix $a_\perp^\top a_\perp$ has rank one. The comparison assumes the uniform regularity imposed here and concerns the LTV model; nonlinear observer stability requires its separate analysis.
\end{exmp}

\section{Case study: integrator chains with range measurements}\label{sec:range-case-study}

This application combines the two main results. We first lift the quadratic measurements into a cascade and apply Theorem~\ref{thm:cascade} to eliminate its measured subsystem. The reduced dynamics are constant and nilpotent, so Theorem~\ref{thm:real} then gives an exact excitation condition on their eigenspace.

\emph{Model and finite lift.}
Consider an arbitrary-order position model with known input and fixed anchors,
\begin{equation}
 p^{(r)}=u(t),\qquad y_i=\tfrac12\|p-a_i\|^2,
 \quad i=1,\ldots,L,
 \label{eq:range-chain}
\end{equation}
where $p,u,a_i\in\mathbb R^d$, $r\geq1$, and $L\geq1$. The outputs are obtained by squaring the measured ranges and dividing by two. Assume $u\in\mathcal C_b^{r-1}$ and that $u^{(r-1)}$ is uniformly continuous; $u\in\mathcal C_b^r$ is sufficient. The state extensions in \citet{TBD21,BTHD23} motivate the following construction, for which the exact excitation condition can be evaluated explicitly.

Set $z_0:=p-a_1$, $z_j:=p^{(j)}$ for $1\leq j<r$, and $z:=\operatorname{col}(z_0,\ldots,z_{r-1})$. Let $S_s$ denote the $s\times s$ matrix with ones on its first superdiagonal and zeros elsewhere, and let $E_j\in\mathbb R^{d\times rd}$ select the block $z_j$. Then
\[
 B:=S_r\otimes I_d,\qquad \dot z=Bz+E_{r-1}^\top u(t).
\]
Here $\otimes$ denotes the Kronecker product. Introduce $m:=2r-1$ auxiliary states
\begin{equation}
 \begin{aligned}
 Q_0&:=E_0^\top E_0,& Q_{k+1}&:=B^\top Q_k+Q_kB,\\
 \xi_k&:=\tfrac12z^\top Q_kz,& \xi&:=\operatorname{col}(\xi_0,\ldots,\xi_{m-1}).
 \end{aligned}
 \label{eq:range-lift}
\end{equation}
Here $0\leq k<m$ in the recursions. The nonzero $(i,j)$ blocks of $Q_k$, indexed from zero, are $\binom{k}{i}I_d$ when $i+j=k$; hence $Q_m=0$. Differentiating the quadratic forms, including the known-input contribution, gives the exact finite-dimensional model
\begin{equation}
 \begin{aligned}
 \dot\xi&=S_m\xi+G(t)z, & y_1&=e_1^\top\xi,\\
 e_{k+1}^\top G(t)&:=u(t)^\top E_{r-1}Q_k,
 &&0\leq k<m.
 \end{aligned}
 \label{eq:range-lifted-dynamics}
\end{equation}
The matrices $Q_k$ are obtained by differentiating the quadratic output along the zero-input dynamics. The term $G(t)z$ accounts for the known input in the actual dynamics of the auxiliary states.

The other ranges provide constant linear outputs through their differences:
\[
 \tfrac12\|a_i-a_1\|^2+y_1-y_i=(a_i-a_1)^\top z_0,
 \qquad i=2,\ldots,L.
\]
Define $H:=\operatorname{col}((a_2-a_1)^\top,\ldots,(a_L-a_1)^\top)$ and $D:=HE_0$, taking $H$ to have no rows when $L=1$. After subtracting known offsets, the range outputs are related to $\operatorname{col}(e_1^\top\xi,Dz)$ by an invertible constant transformation. Differences of two lifted trajectories driven by the same input therefore satisfy \eqref{eq:general-cascade} with $A_o=S_m$ and $C_o=e_1^\top$. The extension adds $2r-1$ scalar states, independently of the number of anchors.

\emph{Cascade reduction.}
For the measured chain, $\nu=m$, $\mathcal O_-=I_m$, and $\mathcal O_+=\operatorname{col}(I_m,0)$. Choosing $N=[0_{1\times m}\ \ 1]$ gives $\Gamma(t)=R_m(t)$, with
\begin{equation}
 \begin{aligned}
 R_0(t)&:=0,\\
 R_{k+1}(t)&:=\dot R_k(t)+R_k(t)B+e_{k+1}^\top G(t),\\
 &\hspace{38mm}0\leq k<m.
 \end{aligned}
 \label{eq:recursion}
\end{equation}
The first $r-1$ rows of $G(t)$ vanish. For $r-1\leq k<m$, its row $e_{k+1}^\top G(t)$ has only block $k-r+1$ nonzero, equal to $\binom{k}{r-1}u(t)^\top$. Thus $R_k(t)=0$ for $k<r$, and $R_r(t)=u(t)^\top E_0$. More generally, induction in \eqref{eq:recursion} gives
\[
 R_k(t)E_j^\top=\binom{k}{j}u^{(k-r-j)}(t)^\top,
 \qquad 0\leq j\leq k-r,
\]
for $r\leq k\leq m$; the blocks with $j>k-r$ vanish. In the induction step, differentiation preserves the block index, multiplication by $B$ shifts it by one, and the coefficients combine by Pascal's identity, including the boundary contribution from $G(t)$. At $k=m$, this yields
\begin{equation}
 R_m(t)E_j^\top=\binom{2r-1}{j}u^{(r-1-j)}(t)^\top,
 \qquad 0\leq j<r.
 \label{eq:range-reduced-row}
\end{equation}
The recursion gives $R_k=0$ for $k<r$, and $R_k$ involves derivatives of $u$ only through order $k-r$ for $r\leq k\leq m$. Thus $R_0,\ldots,R_{m-1}$ are continuously differentiable and bounded, while $R_m$ is bounded and uniformly continuous. Theorem~\ref{thm:cascade} therefore proves that the lifted system is UO if and only if the reduced pair $(B,\operatorname{col}(D,R_m(t)))$ is UO.

\emph{Eigenspace test.}
The matrix $B$ is nilpotent, with $\ker B=\operatorname{im}E_0^\top$. Theorem~\ref{thm:real} therefore makes reduced UO equivalent to PE of $\operatorname{col}(H,u^{(r-1)}(t)^\top)$ on $\mathbb R^d$. Since $H$ is constant, the restriction argument in Corollary~\ref{cor:hidden} removes the directions already measured by $H$. More explicitly,
\[
 \begin{aligned}
 \ker B\cap\ker D&=\{E_0^\top w:w\in\ker H\},\\
 R_m(t)E_0^\top&=u^{(r-1)}(t)^\top.
 \end{aligned}
\]
These reductions give the following characterization.

\begin{prop}[Uniform observability from range measurements]\label{prop:range-uo}
Consider model~\eqref{eq:range-chain} with fixed anchors and the lifted LTV system constructed above. Assume $u\in\mathcal C_b^{r-1}$ and that $u^{(r-1)}$ is uniformly continuous. Let $H$ be the anchor-difference matrix defined above. The homogeneous error system of this lift, with output $\operatorname{col}(e_1^\top\xi,Dz)$, is UO on its full state space $\mathbb R^{rd+2r-1}$ if and only if $u^{(r-1)}(t)^\top$ is PE on $\ker H$, that is, there exist $\delta,\mu>0$ such that
\begin{equation}
 \begin{gathered}
 \frac1\delta\int_t^{t+\delta}
 |u^{(r-1)}(s)^\top w|^2\,ds\geq\mu\|w\|^2,\\
 t\geq0,\qquad w\in\ker H.
 \end{gathered}
 \label{eq:range-pe}
\end{equation}
\end{prop}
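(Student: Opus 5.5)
The plan is to chain Theorem~\ref{thm:cascade} and Corollary~\ref{cor:hidden}, using the computations already made in the case study. First I verify that the homogeneous error system is exactly an instance of \eqref{eq:general-cascade}. Take two lifted trajectories driven by the same input and subtract them. The known-input term $E_{r-1}^\top u$ in the $z$ dynamics cancels, leaving $\dot z=Bz$. In the $\xi$ dynamics the term $G(t)z$ is linear in $z$ with the same $G$, so the difference obeys $\dot\xi=S_m\xi+G(t)z$. The data are $A_o=S_m$, $C_o=e_1^\top$, $D=HE_0$ and $B=S_r\otimes I_d$. The pair $(S_m,e_1^\top)$ is observable with $\nu=m$, $\mathcal O_+=\operatorname{col}(I_m,0)$ and $N=[0\ \ 1]$, so that $\Gamma=R_m$. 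The output $\operatorname{col}(e_1^\top\xi,Dz)$ is precisely $\operatorname{col}(q,Dz)$.

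Next I check the regularity hypotheses of Theorem~\ref{thm:cascade}. The row $e_{k+1}^\top G$ equals $u^\top E_{r-1}Q_k$, so $G$ is continuous and bounded because $u\in\mathcal C_b^{r-1}$. The closed form for $R_kE_j^\top$ obtained from \eqref{eq:recursion} shows that $R_k$ involves $u^{(i)}$ only for $i\le k-r$. For $k\le m-1=2r-2$ this means orders at most $r-2$, so $R_0,\dots,R_{m-1}$ are $C^1$ with bounded derivatives. This ensures that the derivatives in the recursion exist and that $R_m$ is well defined. By \eqref{eq:range-reduced-row}, $R_m$ involves $u,\dots,u^{(r-1)}$. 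These are bounded and continuous, and each is uniformly continuous: $u^{(r-1)}$ by assumption, and the lower derivatives because they have bounded derivatives. Hence $R_m$ is uniformly continuous. The main obstacle I expect is making the Pascal-identity induction for $R_kE_j^\top$ rigorous, including the boundary term contributed by $G$. I would carry it out blockwise by writing $R_k=\sum_j c_{k,j}u^{(k-r-j)\top}E_j$. Right multiplication by $B$ sends $E_j$ to $E_{j+1}$, and the $G$ contribution adds $\binom{k}{r-1}u^\top E_{k-r+1}$. The recursion $c_{k+1,j}=c_{k,j}+c_{k,j-1}$ then closes with $c_{k,j}=\binom{k}{j}$.

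With these hypotheses in place, Corollary~\ref{cor:hidden} applies because $B$ is nilpotent and so has real spectrum $\{0\}$. It gives UO of the full error system if and only if $R_m$ is PE on $U_0=\ker B\cap\ker D$. Since $\ker B=\operatorname{im}E_0^\top$, we have $U_0=\{E_0^\top w:w\in\ker H\}$, and $R_m(t)E_0^\top w=u^{(r-1)}(t)^\top w$ by the case $j=0$ of \eqref{eq:range-reduced-row}. Because $E_0^\top$ is an isometry, the condition $\|E_0^\top w\|=\|w\|$ holds, so PE of $R_m$ on $U_0$ is exactly \eqref{eq:range-pe}. This gives both directions at once.

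Finally I confirm that the state space is $\mathbb R^{m+rd}=\mathbb R^{rd+2r-1}$. The case $L=1$ is covered with $H$ empty and $\ker H=\mathbb R^d$, where \eqref{eq:range-pe} reduces to full PE of $u^{(r-1)}$.
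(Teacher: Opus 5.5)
Your proposal is correct and follows the paper's route. You verify that the error system is an instance of \eqref{eq:general-cascade} with $\Gamma=R_m$, check the hypotheses of Theorem~\ref{thm:cascade} from the closed form of $R_k$, and then apply Corollary~\ref{cor:hidden} using $\ker B\cap\ker D=\{E_0^\top w:w\in\ker H\}$ and $R_m(t)E_0^\top=u^{(r-1)}(t)^\top$; your blockwise Pascal induction, the regularity check and the isometry remark only spell out details that the paper gives in the case-study text preceding the proposition.
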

\begin{proof}
The preceding cascade reduction gives the equivalent hidden pair $(B,\operatorname{col}(D,R_m(t)))$. Since $\ker B\cap\ker D=\{E_0^\top w:w\in\ker H\}$ and $R_m(t)E_0^\top=u^{(r-1)}(t)^\top$, Corollary~\ref{cor:hidden} gives exactly~\eqref{eq:range-pe}.
\end{proof}

\emph{Geometric interpretation.}
The anchor differences span the directions measured by the range differences, while
\[
 \ker H=\operatorname{span}(a_2-a_1,\ldots,a_L-a_1)^\perp.
\]
Proposition~\ref{prop:range-uo} therefore requires excitation only in the orthogonal directions. The condition is vacuous when the anchors affinely span $\mathbb R^d$, and reduces to full-space PE of $u^{(r-1)}(t)^\top$ for a single anchor. For example, in three dimensions with $\operatorname{rank}H=2$, it requires a uniform positive lower bound on the mean square of $u^{(r-1)}$ along a unit normal to the anchor plane.

Multiple-range observability conditions combining anchor geometry with motion excitation were already established by \citet[Lemmas~4.1--4.2]{HS17}; see \citet[Section~3]{HHS20} for the corrected observability argument. In the known-velocity case ($r=1$), their sum of a constant anchor term and an averaged velocity term is equivalent, under the boundedness assumed here, to~\eqref{eq:range-pe}: their anchor matrix and $H$ have the same kernel, so the restriction argument in the proof of Corollary~\ref{cor:hidden} applies. The subspace formulation thus recovers the geometry of that existing condition. The present case study illustrates how the two general theorems yield this separation and provide a necessary and sufficient UO criterion for the specified arbitrary-order lifted LTV system under the stated regularity assumptions.

\emph{Comparison with full-space PE.}
For $r=2$, the extension is $\xi=\operatorname{col}(\tfrac12\|z_0\|^2,z_0^\top z_1,\|z_1\|^2)$ and $R_3(t)=[\dot u(t)^\top,3u(t)^\top]$, recovering the model in \citet[Section~6]{TBD21}. With one anchor, $d=2$, and $u(t)=\operatorname{col}(\cos t,\sin t)$, condition~\eqref{eq:range-pe} holds with $\delta=\pi$ and $\mu=1/2$. Yet
\[
 R_3(t)=\cos t\,[0,1,3,0]+\sin t\,[-1,0,0,3],
\]
so every averaged matrix $R_3^\top R_3$ has rank at most two. Full-space PE of $R_3$, as required by the sufficient condition in \citet[Proposition~7]{TBD21}, fails in dimension four. Proposition~6 of that reference gives sufficiency of reduced UO; Theorem~\ref{thm:cascade} supplies the converse under the regularity imposed here. The equivalence \eqref{eq:range-pe} concerns the lifted LTV system on its full state space. It does not assert necessity for observability of the original nonlinear model, whose lifted states satisfy the constraints in \eqref{eq:range-lift}.

\section{Conclusion}
For constant real-spectrum dynamics, uniform observability is equivalent to persistent excitation on each eigenspace separately. This characterization holds for measurable, essentially bounded output matrices and admits an equivalent averaged Kalman test. It requires neither output differentiation nor instantaneous full rank, and clarifies why excitation on larger subspaces can impose unnecessary restrictions.

For cascades with an observable constant measured subsystem, elimination through a finite derivative stack gives an exact observability test for the hidden dynamics under explicit uniform regularity conditions. The reduction itself requires no spectral restriction. When the hidden spectrum is real, excitation is needed only on each eigenspace intersected with the kernel of the direct measurement matrix. The rapid-oscillation example shows that boundedness and smoothness alone do not suffice for this reduction.

For an arbitrary-order position chain measured through ranges to fixed anchors, the two reductions yield PE of a known input derivative only on the orthogonal complement of the anchor differences. This condition characterizes UO of the full lifted LTV model.

A remaining question is how to formulate similarly direct tests when the state matrix has nonreal eigenvalues, including purely imaginary pairs. The rotation example shows that such tests must account for cancellations between oscillatory dynamics and the time-varying output.

\begin{ack}
The author thanks Professors Tarek Hamel and Abdelhamid Tayebi for valuable discussions during the development of many of the results presented in this paper.

This work was supported in part by the Natural Sciences and Engineering Research Council of Canada (NSERC) under Discovery Grant RGPIN-2020-04759 and by the Fonds de recherche du Qu\'ebec (FRQ).
\end{ack}

\appendix

\section{Proof of Theorem~\ref{thm:real}}\label{app:proof-real}

\begin{proof}
\emph{Necessity.} Suppose that $W_{\delta_o}(t)\succeq\mu_o I_n$ for every $t\geq0$. For $v\in E_\lambda$, put $\lambda_+:=\max\{\lambda,0\}$. Then
\[
 \begin{aligned}
 \mu_o\|v\|^2
 &\leq\frac1{\delta_o}\int_0^{\delta_o}e^{2\lambda s}\|C(t+s)v\|^2ds\\
 &\leq\frac{e^{2\lambda_+\delta_o}}{\delta_o}
       \int_t^{t+\delta_o}\|C(s)v\|^2ds.
 \end{aligned}
\]
This proves PE on each eigenspace.

\emph{Sufficiency.} For each eigenvalue, choose PE constants $\delta_\lambda,\mu_\lambda>0$. Enlarging the integration windows gives common constants $\delta=\max_\lambda\delta_\lambda$ and $\mu=\min_\lambda(\mu_\lambda\delta_\lambda)/\delta>0$. Let $M:=\operatorname*{ess\,sup}_t\|C(t)\|$; PE implies $M\geq\sqrt\mu>0$. We first obtain a bound uniform in the initial time for each fixed unit initial state, and then use compactness to make the bound uniform over all such states.

The generalized eigenspaces $\mathcal G_\lambda:=\ker((A-\lambda I_n)^n)$ give the direct sum $\mathbb R^n=\bigoplus_\lambda\mathcal G_\lambda$. Write a unit vector $x$ uniquely as $x=\sum_\lambda x_\lambda$, with $x_\lambda\in\mathcal G_\lambda$. Choose $\lambda_x:=\max\{\lambda:x_\lambda\ne0\}$, and let $k_x$ be the largest nonnegative integer for which $(A-\lambda_x I_n)^{k_x}x_{\lambda_x}\ne0$. Then $0\leq k_x\leq n-1$, and $v_x:=(A-\lambda_x I_n)^{k_x}x_{\lambda_x}$ is a nonzero vector in $E_{\lambda_x}$. The finite expansion
\[
 e^{As}x=\sum_\lambda e^{\lambda s}
          \sum_{j=0}^{n-1}\frac{s^j}{j!}(A-\lambda I_n)^jx_\lambda
\]
therefore has the form
\begin{equation}
 e^{As}x=a_x(s)\bigl(v_x+\eta_x(s)\bigr),\qquad s>0,
 \label{eq:dominant}
\end{equation}
where $a_x(s):=e^{\lambda_xs}s^{k_x}/k_x!$ and $\eta_x(s)\to0$ as $s\to\infty$. To see the latter, divide the expansion by $a_x(s)$. Terms belonging to $\lambda_x$ have lower polynomial degree than $k_x$, except for $v_x$ itself. Terms belonging to smaller eigenvalues contain an exponentially decaying factor multiplying a fixed power of $s$.

Choose $r_x\geq1$ such that $\|\eta_x(s)\|\leq(\sqrt\mu/(2M))\|v_x\|$ for $s\geq r_x$, and set $I_x:=[r_x,r_x+\delta]$. PE and boundedness of $C$ give, for every $t\geq0$,
\[
 \begin{aligned}
 \int_{I_x}\|C(t+s)v_x\|^2ds&\geq\mu\delta\|v_x\|^2,\\
 \int_{I_x}\|C(t+s)\eta_x(s)\|^2ds&\leq\frac{\mu\delta}{4}\|v_x\|^2.
 \end{aligned}
\]
The triangle inequality in $L^2(I_x)$ consequently gives
\[
 \int_{I_x}\|C(t+s)(v_x+\eta_x(s))\|^2ds
 \geq\frac{\mu\delta}{4}\|v_x\|^2.
\]
Since $a_x$ is continuous and positive on $I_x$, its squared minimum $c_x:=\min_{s\in I_x}a_x(s)^2$ is positive. Set $\delta_x:=r_x+\delta$ and $b_x:=c_x\mu\delta\|v_x\|^2/4>0$. Equation~\eqref{eq:dominant} yields
\begin{equation}
 \int_0^{\delta_x}\|C(t+s)e^{As}x\|^2ds\geq b_x,\qquad t\geq0.
 \label{eq:directionbound}
\end{equation}

To obtain a common window, define
\[
 L_x:=M\left(\int_0^{\delta_x}\|e^{As}\|^2ds\right)^{1/2},
 \qquad \rho_x:=\frac{\sqrt{b_x}}{2L_x}.
\]
The $L^2(0,\delta_x)$ norm of $C(t+s)e^{As}(u-x)$ is at most $L_x\|u-x\|$. Applying the triangle inequality once more to \eqref{eq:directionbound} shows that
\[
 \int_0^{\delta_x}\|C(t+s)e^{As}u\|^2ds>\frac{b_x}{4}
\]
whenever $\|u-x\|<\rho_x$, uniformly in $t$. The neighborhoods $\mathcal U_x:=\{u\in\mathbb S^{n-1}:\|u-x\|<\rho_x\}$ cover the unit sphere. Choose a finite subcover associated with $x_1,\ldots,x_N$, and put $\delta_o:=\max_i\delta_{x_i}$ and $\mu_o:=\min_i b_{x_i}/(4\delta_o)>0$. For any unit vector $u$, choose $i$ with $u\in\mathcal U_{x_i}$. Enlarging the integration interval from $[0,\delta_{x_i}]$ to $[0,\delta_o]$ gives
\[
 u^\top W_{\delta_o}(t)u\geq\frac{b_{x_i}}{4\delta_o}\geq\mu_o.
\]
Thus $W_{\delta_o}(t)\succeq\mu_o I_n$ for every $t\geq0$.
\end{proof}

\section{Proof of Corollary~\ref{cor:apower}}\label{app:proof-apower}

\begin{proof}
Suppose first that $(A,C(t))$ is UO. By Theorem~\ref{thm:real}, choose common PE constants $\delta,\mu>0$ for its eigenspaces. Let $V_\lambda$ have orthonormal columns spanning $E_\lambda$, and define
\[
 Q_t:=\frac1\delta\int_t^{t+\delta}C(s)^\top C(s)ds,
 \qquad M:=\operatorname*{ess\,sup}_s\|C(s)\|.
\]
For a symmetric matrix $Q$, define
\[
 \mathcal F_A(Q):=\sum_{k=0}^{n-1}(A^k)^\top QA^k.
\]
Constancy of $A$ gives
\[
 \frac1\delta\int_t^{t+\delta}\mathcal O_A(s)^\top\mathcal O_A(s)ds
 =\mathcal F_A(Q_t).
\]
The symmetric matrices satisfying $0\preceq Q\preceq M^2I_n$ and $V_\lambda^\top QV_\lambda\succeq\mu I_{\dim E_\lambda}$ for every $\lambda$ form a compact set $\mathcal K_A$ containing every $Q_t$.

We claim that $\mathcal F_A(Q)$ is positive definite for every $Q\in\mathcal K_A$. Otherwise, there would be $x\ne0$ with
\[
 0=x^\top\mathcal F_A(Q)x
   =\sum_{k=0}^{n-1}\|Q^{1/2}A^kx\|^2.
\]
Hence $Q^{1/2}$ would vanish on the subspace
\[
 \mathcal X:=\operatorname{span}(x,Ax,\ldots,A^{n-1}x).
\]
By the Cayley--Hamilton theorem, $\mathcal X$ is nonzero and $A$-invariant. The restriction of $A$ to $\mathcal X$ has real spectrum and therefore has a real eigenvector $v\ne0$. This vector belongs to some $E_\lambda$ and satisfies $Q^{1/2}v=0$, contradicting $v^\top Qv\geq\mu\|v\|^2$. Compactness of $\mathcal K_A$ and continuity of $\lambda_{\min}(\mathcal F_A(Q))$ now give a positive uniform lower bound. Thus $\mathcal O_A$ is PE.

Conversely, suppose that $\mathcal O_A$ is PE with constants $\delta,\beta>0$, and define $Q_t$ with this window. For $v\in E_\lambda$,
\[
 \beta\|v\|^2\leq v^\top\mathcal F_A(Q_t)v
 =\left(\sum_{k=0}^{n-1}\lambda^{2k}\right)v^\top Q_tv.
\]
The sum is a positive constant for each eigenvalue, so $C$ is PE on every $E_\lambda$. Theorem~\ref{thm:real} proves UO.
\end{proof}

\section{Proof of Theorem~\ref{thm:cascade}}\label{app:proof-cascade}

\begin{proof}
For $k<\nu$, the recursion gives
\[
 \dot R_k(t)=R_{k+1}(t)-R_k(t)B-C_oA_o^kG(t).
\]
Thus $R_0,\ldots,R_{\nu-1}$ have continuous, uniformly bounded first derivatives, and the identities \eqref{eq:cascade-derivatives}--\eqref{eq:cascade-elimination} hold along every trajectory.

\emph{Sufficiency.}
Suppose that $(B,F(t))$ is UO, and let $\delta,\mu_r>0$ be an observation window and a normalized lower bound for this pair. We proceed by contradiction, following the sequence construction in \citet[Proposition~6]{TBD21}; see also \citet[Appendix~A.1]{Scandaroli13}. Denote by $\Phi_c(s,t)$ the state-transition matrix of the full cascade, and set
\[
 \mathcal C:=\begin{bmatrix}C_o&0\\0&D\end{bmatrix}.
\]
Assume that the full system is not UO. Consider a sequence $(\mu_p)_{p\geq1}$ such that $\mu_p>0$ and $\lim_{p\to\infty}\mu_p=0$. For the fixed window $\delta$, there exist a sequence of times $(t_p)_{p\geq1}\subset[0,\infty)$ and a sequence of vectors $(\widehat v_p)_{p\geq1}$ in $\mathbb S^{m+n-1}$ such that
\[
 \frac1\delta\int_{t_p}^{t_p+\delta}
 \|\mathcal C\Phi_c(s,t_p)\widehat v_p\|^2ds<\mu_p,
 \qquad p\geq1.
\]
Since $\mathbb S^{m+n-1}$ is compact, we can extract a subsequence, still indexed by $p$, for which
\[
 \lim_{p\to\infty}\widehat v_p=v
   =\operatorname{col}(v_\xi,v_z),\qquad \|v\|=1.
\]
The full state matrix is uniformly bounded, so $\Phi_c(s,t_p)$ is bounded uniformly in $p$ for $s\in[t_p,t_p+\delta]$. The same is true of $\mathcal C\Phi_c(s,t_p)$. Since the integration windows have fixed length, we may replace $\widehat v_p$ by $v$ in the preceding vanishing integral and obtain
\begin{equation}
 \lim_{p\to\infty}\int_{t_p}^{t_p+\delta}
 \|\mathcal C\Phi_c(s,t_p)v\|^2ds=0.
 \label{eq:cascade-small-output}
\end{equation}

Let $\xi_p,z_p,q_p$ denote the trajectories starting at time $t_p$ from the fixed initial state $(v_\xi,v_z)$. Thus
\[
 \begin{aligned}
 \operatorname{col}(\xi_p(t),z_p(t))&=\Phi_c(t,t_p)v,\\
 z_p(t)&=e^{B(t-t_p)}v_z,\qquad q_p(t)=C_o\xi_p(t).
 \end{aligned}
\]
Define $f_p(s):=q_p(t_p+s)$ for $s\in[0,\delta]$. By a change of integration variable, \eqref{eq:cascade-small-output} becomes
\[
 \lim_{p\to\infty}\int_0^\delta
 \bigl(\|f_p(s)\|^2+\|De^{Bs}v_z\|^2\bigr)ds=0.
\]
In particular, $\lim_{p\to\infty}\int_0^\delta\|f_p(s)\|^2ds=0$. From \eqref{eq:cascade-derivatives}, the successive derivatives of $f_p$ satisfy
\begin{equation}
 \begin{aligned}
 f_p^{(k)}(s)
 &=C_oA_o^k\xi_p(t_p+s)+R_k(t_p+s)e^{Bs}v_z,\\
 &\hspace{30mm}0\leq k\leq\nu.
 \end{aligned}
 \label{eq:cascade-translated-derivatives}
\end{equation}
The trajectories and their first derivatives are uniformly bounded on the observation windows. Hence $f_p^{(k)}$, $0\leq k\leq\nu$, are uniformly bounded on $[0,\delta]$. For $k<\nu$, boundedness of $f_p^{(k+1)}$ makes the family $(f_p^{(k)})_p$ equicontinuous. For the highest derivative, \eqref{eq:cascade-translated-derivatives} and uniform continuity of $R_\nu$ give
\[
 \|f_p^{(\nu)}(s)-f_p^{(\nu)}(u)\|
 \leq c_\delta|s-u|+M_\delta\omega(|s-u|),
\]
for $s,u\in[0,\delta]$, where $c_\delta,M_\delta$ are independent of $p$, and $\omega$ is a modulus of continuity of $R_\nu$ satisfying $\lim_{h\to0^+}\omega(h)=0$. Thus $(f_p^{(\nu)})_p$ is also equicontinuous.

By Arzel\`a--Ascoli, we can extract a further subsequence, still indexed by $p$, along which $f_p^{(k)}$ converges uniformly to a continuous function $g_k$ on $[0,\delta]$, simultaneously for $k=0,\ldots,\nu$. The vanishing integral of $\|f_p\|^2$ implies $g_0=0$. Passing to the limit in
\[
 f_p^{(k)}(s)=f_p^{(k)}(0)+\int_0^s f_p^{(k+1)}(u)du
\]
gives $g_k(s)=g_k(0)+\int_0^s g_{k+1}(u)du$ for $k<\nu$. It follows successively that $g_1,\ldots,g_\nu$ are zero. Hence
\begin{equation}
 \lim_{p\to\infty}\int_0^\delta\|f_p^{(k)}(s)\|^2ds=0,
 \qquad k=0,\ldots,\nu.
 \label{eq:cascade-small-derivatives}
\end{equation}

Using \eqref{eq:cascade-elimination}, we have
\[
 \Gamma(t_p+s)e^{Bs}v_z
   =N\operatorname{col}(f_p(s),\ldots,f_p^{(\nu)}(s)).
\]
Consequently,
\[
 \begin{aligned}
 &\int_0^\delta\|\Gamma(t_p+s)e^{Bs}v_z\|^2ds\\
 &\qquad\leq\|N\|^2\sum_{k=0}^\nu
       \int_0^\delta\|f_p^{(k)}(s)\|^2ds.
 \end{aligned}
\]
Together with \eqref{eq:cascade-small-output} and \eqref{eq:cascade-small-derivatives}, this yields
\[
 \lim_{p\to\infty}\int_{t_p}^{t_p+\delta}
 \|F(s)e^{B(s-t_p)}v_z\|^2ds=0.
\]
On the other hand, UO of the reduced pair gives, for every $p$,
\[
 \frac1\delta\int_{t_p}^{t_p+\delta}
 \|F(s)e^{B(s-t_p)}v_z\|^2ds\geq\mu_r\|v_z\|^2.
\]
The preceding limit therefore implies $v_z=0$. The measured-state trajectory then satisfies $\xi_p(t_p+s)=e^{A_os}v_\xi$, so \eqref{eq:cascade-small-output} reduces to
\[
 \int_0^\delta\|C_oe^{A_os}v_\xi\|^2ds=0.
\]
Since $(A_o,C_o)$ is observable, its Gramian on $[0,\delta]$ is positive definite, and hence $v_\xi=0$. We obtain $v=0$, contradicting $\|v\|=1$. This proves sufficiency.

\emph{Necessity.}
We construct a measured initial state for which the integral of $\|q\|^2$ is bounded above by a fixed multiple of the integral of $\|\Gamma z\|^2$. Define the constant left inverse
\[
 L:=(\mathcal O_-^\top\mathcal O_-)^{-1}\mathcal O_-^\top,
 \qquad L\mathcal O_-=I_m,
\]
and set $T(t):=L\mathcal R_-(t)$ and $\eta(t):=\xi(t)+T(t)z(t)$. The preceding regularity assumptions make $T$ continuously differentiable and uniformly bounded.

Let $S_-,S_+,S_0$ be the constant matrices selecting, respectively, blocks $0,\ldots,\nu-1$, blocks $1,\ldots,\nu$, and block $0$ from a stack of $\nu+1$ blocks of size $p$. Define
\[
 J:=C_oLS_--S_0,\qquad M:=LS_+-A_oLS_-.
\]
The identities
\[
 S_-\mathcal O_+=\mathcal O_-,\qquad
 S_+\mathcal O_+=\mathcal O_-A_o,\qquad
 S_0\mathcal O_+=C_o
\]
give $J\mathcal O_+=0$ and $M\mathcal O_+=0$. Because the rows of $N$ span the entire left nullspace of $\mathcal O_+$, there are constant matrices $\bar J,\bar M$ such that
\[
 J=\bar JN,\qquad M=\bar MN.
\]
This is the step that requires the full left nullspace in~\eqref{eq:cascade-nullspace}.

Stacking the recursions~\eqref{eq:general-recursion} gives
\[
 \dot{\mathcal R}_-(t)+\mathcal R_-(t)B
 =S_+\mathcal R_+(t)-\mathcal O_-G(t).
\]
Multiplying by $L$ and using $L\mathcal O_-=I_m$, we obtain
\[
 G(t)+\dot T(t)+T(t)B=LS_+\mathcal R_+(t).
\]
It follows that
\[
 \begin{aligned}
 \dot\eta(t)
 &=A_o\eta(t)+\bigl(LS_+-A_oLS_-\bigr)\mathcal R_+(t)z(t)\\
 &=A_o\eta(t)+\bar M\Gamma(t)z(t).
 \end{aligned}
\]
Moreover, $R_0(t)=0$ gives
\[
 C_oT(t)=\bigl(C_oLS_--S_0\bigr)\mathcal R_+(t)
         =\bar J\Gamma(t).
\]
Thus, with $f(t):=\Gamma(t)z(t)$, the transformed equations are
\begin{equation}
 \dot\eta(t)=A_o\eta(t)+\bar M f(t),\qquad
 q(t)=C_o\eta(t)-\bar J f(t).
 \label{eq:cascade-coordinate}
\end{equation}

Suppose the full system is UO with window $\delta$ and normalized lower bound $\mu$. Fix $t\geq0$ and $z_0\in\mathbb R^n$, and choose $z(t)=z_0$ and $\xi(t)=-T(t)z_0$. Then $\eta(t)=0$, so variation of constants in \eqref{eq:cascade-coordinate} gives
\[
 \begin{aligned}
 q(t+s)&=-\bar J f(t+s)\\
 &\quad+\int_0^s C_oe^{A_o(s-u)}\bar M f(t+u)du.
 \end{aligned}
\]
For $0\leq s\leq\delta$, Young's convolution inequality yields
\[
 \|q\|_{\mathcal L^2([t,t+\delta])}
 \leq K_\delta\|f\|_{\mathcal L^2([t,t+\delta])},
\]
where the constant
\[
 K_\delta:=\|\bar J\|+
       \int_0^\delta\|C_oe^{A_os}\bar M\|ds
\]
is finite and independent of $t$ and $z_0$. Set $c_\delta:=\max\{1,K_\delta^2\}$. Applying full UO to the chosen initial state gives
\[
 \begin{aligned}
 \mu\delta\|z_0\|^2
 &\leq\mu\delta\bigl(\|T(t)z_0\|^2+\|z_0\|^2\bigr)\\
 &\leq\int_0^\delta
       \bigl(\|q(t+s)\|^2+\|De^{Bs}z_0\|^2\bigr)ds\\
 &\leq c_\delta\int_0^\delta\|F(t+s)e^{Bs}z_0\|^2ds.
 \end{aligned}
\]
Hence the reduced pair is UO with the same window and normalized lower bound $\mu/c_\delta$.
\end{proof}

\end{document}